\def\withproofs{1}
  \def\scalefigure{0.7}
  \def\scalefigure{0.60}
\def\backwardsbuckets{1}
\title{Identity testing under label mismatch} 
\author{Cl\'ement L. Canonne\thanks{University of Sydney. Email: \url{clement.canonne@sydney.edu.au}} \and Karl Wimmer\thanks{Duquesne University.  Email: \url{wimmerk@duq.edu}}}
\begin{document}

\maketitle

\begin{abstract}
Testing whether the observed data conforms to a purported model (probability distribution) is a basic and fundamental statistical task, and one that is by now well understood. However, the standard formulation, \emph{identity testing}, fails to capture many settings of interest; in this work, we focus on one such natural setting, \emph{identity testing under promise of permutation.} In this setting, the unknown distribution is assumed to be equal to the purported one, up to a relabeling (permutation) of the model: however, due to a systematic error in the reporting of the data, this relabeling may not be the identity. The goal is then to test identity under this assumption: equivalently, whether this systematic labeling error led to a data distribution statistically far from the reference model.
\end{abstract}

\section{Introduction}
    \label{sec:intro}
Imagine you painstakingly gathered observations, data point after data point, and managed to form an accurate estimate of the data distribution; unfortunately, you did not record the labels correctly, and due to a systematic error the data labels have been permuted in an unknown and arbitrary way. You did make your best educated guess to fix this though, and are confident the data, once carefully relabeled, \emph{should} reflect the reality.
Can you check this, without having to go through the whole process of obtaining an entirely new dataset?

In this paper, we are concerned with a variant of identity testing which captures the above scenario, where one is promised that the 
unknown distribution is equal to the reference distribution $\q$ \emph{up to a permutation of the domain}. 
Formally, the algorithm has access to i.i.d.\ samples from a probability distribution $\p$ over a finite domain $[n]\eqdef \{1,2,\dots,n\}$ such 
that $\p\circ \pi = \q$ for some (unknown) $\pi\in\mathcal{S}_n$, and, on input $0\leq \dst'<\dst \leq 1$, must output $\yes$ or $\no$ such that
\begin{itemize}
  \item if $\totalvardist{\p}{\q}\leq \dst'$, then the algorithm outputs $\yes$ with probability at least $2/3$;
  \item if $\totalvardist{\p}{\q} > \dst$, then the algorithm outputs $\no$ with probability at least $2/3$.
\end{itemize}
When $\dst'=0$, the task is termed \emph{identity testing (under promise of permutation)}; otherwise, it is \emph{tolerant identity testing}. It is worth noting that this permutation promise fundamentally changes the problem, and makes it incomparable to the standard identity testing problem. As an illustrative example, it is known that \emph{uniformity} testing, where the reference distribution $\q$ is uniform over $[n]$, is the ``hardest'' case of identity testing, with sample complexity $\bigTheta{\sqrt{n}}$ and $\bigTheta{n/\log n}$ for the testing and tolerant testing versions, respectively~\cite{Paninski08,ValiantV11,ValiantV17,Goldreich16}. However, it is easy to see that under the permutation promise, uniformity testing is a trivial problem which can be solved with \emph{zero} samples: any permutation of the uniform distribution is itself the uniform distribution.

Our results further demonstrate this stark difference, showing how the difficulty of testing and tolerant testing differ under this promise. In particular, we show an exponential gap between the sample complexities of non-tolerant and tolerant identity testing under this promise: to the best of our knowledge, this constitutes the first example of such a gap between the tolerant and non-tolerant version of a problem in distribution testing.

\subsection{Our results}
Our results show that, quite surprinsingly, the promise of equality up to permutation of the domain fundamentally changes the sample complexity landscape, and is both qualitatively and quantitatively different from what one could expect from the known bounds on identity and tolerant identity testing without this promise.

Our first set of results indeed establishes that, in contrast to the $\Theta(\sqrt{n})$ sample complexity of ``regular'' identity testing, identity testing under promise of permutation has sample complexity merely \emph{polylogarithmic} in the domain size:
\begin{theorem}[{\cref{theo:ub:testing,theo:testing:lb}, (Informal)}]
Identity testing under promise of permutation has sample complexity $\bigTheta{\log^2 n}$, where $n$ is the domain size.
\end{theorem}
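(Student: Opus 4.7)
Both bounds hinge on a common ``probability-level bucketing'' of the domain derived from $\q$: partition $[n]$ into $L = O(\log n / \dst)$ geometric buckets $A_l := \setOfSuchThat{i \in [n]}{\q(i) \in [(1+\gamma)^{-l}, (1+\gamma)^{-l+1})}$ for a small $\gamma = \Theta(\dst)$, together with a single auxiliary bucket collecting the indices $i$ with $\q(i) \leq 1/\poly(n)$. A key observation is that within a bucket no two $\q$-values differ by more than a factor $1+\gamma$, so any permutation acting purely \emph{within} buckets contributes at most $O(\gamma) = O(\dst)$ to the total variation.

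For the upper bound, the tester draws $s = \Theta(\log^2 n / \dst^{O(1)})$ i.i.d.\ samples from $\p$, computes the empirical bucket masses $\hat p_l$, and accepts iff a suitable aggregate statistic (of chi-squared or $L_2$ type, such as $\widehat T := \sum_{l} (\hat p_l - \q(A_l))^2 / \q(A_l)$ with the usual unbiased correction) falls below a threshold. The central structural lemma to prove is that whenever $\p = \q \circ \pi^{-1}$ satisfies $\dtv(\p,\q) > \dst$, the population bucket profile $(\p(A_l))_l$ is sufficiently far (in the statistic's metric) from $(\q(A_l))_l$ to be detectable from $s$ samples. I would view the permutation as a bipartite flow with amounts $y_{l,l'} := \abs{\setOfSuchThat{i \in A_l}{\pi^{-1}(i) \in A_{l'}}}$ and costs $|v_{l'} - v_l|$ (where $v_l$ is a representative value of $A_l$): up to a within-bucket error of order $\gamma$, one has $2\,\dtv(\p,\q) = \sum_{l,l'} y_{l,l'}|v_{l'} - v_l|$ and $\p(A_l) - \q(A_l) = \sum_{l'} y_{l,l'}(v_{l'} - v_l)$. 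A combinatorial argument (paths and cycles of length at most $L$) then identifies the worst case as a ``chain'' permutation cycling mass through all $L$ buckets in series, and pins down the resulting bound.

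The main obstacle is precisely this structural lemma, because an adversarial chain permutation spreads the TV distance across all $L$ buckets and makes each per-bucket mass change as small as $\Theta(\dst/L)$ --- so pinning down the right aggregate statistic, and showing chains saturate it, is the heart of the matter. Secondary technicalities include calibrating $\gamma$ against $\dst$ and handling the small-probability tail.

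For the matching $\Omega(\log^2 n)$ lower bound, I would exhibit a reference $\q$ supported on $L = \Theta(\log n)$ geometric levels of equal total mass $1/L$, and take the ``no'' ensemble to be the distribution over chain permutations that cycle mass through the entire ladder of levels, calibrated so that $\dtv(\p,\q) = \dst$. Applying Le Cam's two-point method --- equivalently bounding the chi-squared distance between the mixtures of $s$-sample distributions under the yes and no ensembles --- then shows that the two are indistinguishable unless $s = \Omega(\log^2 n / \dst^{O(1)})$: each bucket's mass shifts by only $\Theta(\dst/L)$, which is below the sampling noise floor attainable for any such $s$.
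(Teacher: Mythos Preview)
Your bucketing framework and identification of cascade permutations as the extremal case are both correct, but there is a genuine gap on each side.

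\textbf{Upper bound.} The paper does \emph{not} use a $\chi^2$ or $L_2$ statistic on per-bucket masses; it tests whether some \emph{suffix} $\bigcup_{\ell\ge\ell^\ast}B_\ell$ has anomalous total mass---that is, it works in Kolmogorov distance on the bucket profile and invokes the DKW inequality. The distinction matters. A cascade $B_1\to B_2\to\cdots\to B_L$ can leave every interior bucket's mass \emph{exactly} intact (what flows in from $B_{\ell-1}$ balances what leaves to $B_{\ell+1}$), so a per-bucket statistic sees only two endpoint deviations of size $\Theta(\dst/L)$; the resulting $\chi^2$ signal is $\Theta(\dst^2/L^2)$ against null fluctuations of order $\sqrt{L}$, which forces $s=\Omega(L^{5/2})$ rather than $O(L^2)$. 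The paper's structural claim is instead that some \emph{cumulative} mass satisfies $\bigl|\p(\bigcup_{\ell\ge\ell^\ast}B_\ell)-\q(\bigcup_{\ell\ge\ell^\ast}B_\ell)\bigr|>\dst/(4L)$---mass flowing through the chain must cross the cut at some $\ell^\ast$---and a Kolmogorov gap of this size is detectable with $O(L^2/\dst^2)=O((\log^2 n)/\dst^4)$ samples by DKW.

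\textbf{Lower bound.} Taking all $L$ levels to carry equal mass $1/L$ only yields $\Omega(\log n)$: a shift of $\Theta(\dst/L)$ in a Bernoulli of parameter $\Theta(1/L)$ is detectable with $\Theta(L/\dst^2)$ samples. The paper instead gives the two \emph{endpoint} buckets mass $\Theta(1)$ each (the middle $L-2$ buckets get $\Theta(1/L)$ apiece), engineers the cascade so that the middle-bucket masses are preserved exactly, and randomises the permutation inside each bucket so that---below the $n^{1/4}$ birthday threshold---those buckets leak no information whatsoever. What remains is distinguishing $\mathrm{Ber}(1/3)$ from $\mathrm{Ber}(1/3\pm\Theta(\dst/L))$, and that requires $\Omega(L^2/\dst^2)=\Omega((\log^2 n)/\dst^2)$ samples.
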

Given the fact that (regular) tolerant identity testing has sample complexity nearly quadratically higher than (regular) identity testing, one could conjecture that the sample complexity tolerant testing under our promise remains polylogarithmic. Our next set of results shows that this is far from being the case: instead, allowing for some noise tolerance makes the promise of equality up to permutation essentially useless, as the sample complexity blows up \emph{exponentially}, growing from poylogarithmic to nearly linear in the domain size:
\begin{theorem}[{\cref{theo:toltesting:ub,theo:testing:lb}, (Informal)}]
Tolerant identity testing under promise of permutation has sample complexity $\bigTheta{n^{1-o(1)}}$, where $n$ is the domain size.
\end{theorem}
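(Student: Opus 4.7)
We sketch a proof of matching upper and lower bounds of the form $n^{1-o(1)}$ for tolerant identity testing under the permutation promise. The two directions use quite different techniques, with the lower bound being the more delicate of the two.

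\textbf{Upper bound ($\tildeO{n}$).} The strategy is to reduce tolerant testing to \emph{learning the permutation} $\pi$ relating $\p$ to $\q$, but only to the extent that this permutation affects the TV distance. After drawing $m = \tildeO{n/(\eps-\eps')^2}$ samples, form the empirical distribution $\hat\p$. The permutation promise $\p = \q\circ\pi$ tells us that the multiset of values of $\p$ equals the (known) multiset of values of $\q$; this recasts estimating $\dtv(\p,\q)$ as an assignment problem: among all bijections $\sigma\colon[n]\to[n]$, find the one minimizing $\sum_i |\hat\p(i) - \q(\sigma(i))|$, and use its value (suitably debiased) as an estimator of $2\,\dtv(\p,\q)$. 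To implement this efficiently, partition $[n]$ into $O(\log n)$ doubling probability buckets $\bucket{}{j}=\{i : \q(i) \in (2^{-j-1},2^{-j}]\}$, and within each bucket match the sorted empirical values to the sorted $\q$-values. Bernstein/Chernoff bounds give an additive error $\tildeO{\sqrt{\q(i)/m}}$ per element of probability $\q(i)$, and summing over buckets shows that the induced error in the TV estimate is $o(\eps-\eps')$ once $m=\tildeO{n}$, which yields the claimed complexity.

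\textbf{Lower bound ($n^{1-o(1)}$).} We follow the Valiant--Valiant paradigm for tolerant testing, but the construction must respect the permutation promise: we build two priors $\dyes, \dno$ over distributions $\p$, each supported on distributions of the form $\q \circ \pi$ for a fixed reference $\q$, with $\dtv(\q\circ\pi,\q)\le\eps'$ almost surely under $\dyes$ and $\dtv(\q\circ\pi,\q)>\eps$ almost surely under $\dno$. The reference $\q$ is designed to have a long near-$1/n$ tail spread across $\Theta(\log n)$ geometrically spaced buckets of carefully tuned sizes; $\dyes$ will mostly permute inside buckets (causing no TV change), while $\dno$ will couple permutations across adjacent buckets in a way that forces positive cumulative TV but remains statistically subtle.

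\textbf{Core obstacle.} The hard part is showing that, with $m = n^{1-o(1)}$ samples, the distribution of the Poissonized count vector $(N_i)_{i\in[n]}$ is nearly identical under $\dyes$ and $\dno$. Unlike the classical setting, both priors produce the \emph{same} multiset of true probabilities, so one cannot rely on moment-matching of the profile. Instead, one must match moments of the joint \emph{label-to-probability assignment} across the two priors, bounding the $\chi^2$-divergence between the induced sample distributions by a sum indexed by the orbit/cycle structure of the random permutation within and across buckets. A direct implementation yields $\Omega(n/\poly\log n)$; pushing the bound to $n^{1-o(1)}$ requires a refined construction with slowly varying bucket sizes and exponentially small perturbations, so that all moments up to a slowly-growing order $\omega(1)$ agree between $\dyes$ and $\dno$. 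Once this $\chi^2$ bound is in place, a standard Le Cam two-point argument concludes the lower bound.
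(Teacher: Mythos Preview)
Your algorithm is more elaborate than needed. The paper simply invokes the known $O(n/\log n)$ upper bound for tolerant identity testing \emph{without} any promise (Valiant--Valiant, and Jiao--Han--Weissman): since adding a promise can only make the problem easier, that estimator works here verbatim. Your matching-within-buckets scheme may well be correct, but it is unnecessary work for this direction.

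\paragraph{Lower bound: a genuine gap.} Your construction---a reference with $\Theta(\log n)$ geometric buckets, $\dyes$ permuting within buckets and $\dno$ coupling across adjacent buckets---is essentially the paper's \emph{non-tolerant} lower bound construction (Section~3.2), which yields only $\Omega(\log^2 n)$. To push to $n^{1-o(1)}$ you need high-order moment matching, and your proposal does not explain how to achieve this while keeping both priors supported on permutations of a \emph{single} reference $\q$. The phrases ``match moments of the joint label-to-probability assignment'' and ``$\chi^2$-divergence indexed by orbit/cycle structure'' do not describe a concrete mechanism; as stated, it is unclear what quantities you are matching or why cross-bucket permutations would agree with within-bucket permutations on all moments up to order $\omega(1)$.

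The paper's key idea, which your proposal is missing, is a black-box reduction from the classical Valiant--Valiant moment-matching pair. Take $\p_{\mathrm{VV}},\q_{\mathrm{VV}}$ on $[k]$ with matching first $r$ moments and constant TV distance. From them build three distributions $\closedist,\fardist,\refdist$ on $[2k^2]$: the reference $\refdist$ places $k$ copies of each value $\p_{\mathrm{VV}}(i)$ and $k$ copies of each value $\q_{\mathrm{VV}}(i)$; the ``close'' distribution $\closedist$ lays out $k$ copies of $\p_{\mathrm{VV}}$ followed by $k$ copies of $\q_{\mathrm{VV}}$; the ``far'' distribution $\fardist$ swaps them. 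Both $\closedist$ and $\fardist$ are permutations of $\refdist$, and a short calculation (Claim~4.3 in the paper) shows $\dtv(\fardist,\refdist)\ge\dtv(\closedist,\refdist)+\tfrac{1}{k}\dtv(\p_{\mathrm{VV}},\q_{\mathrm{VV}})$. Now concatenate $m=n/(2k^2)$ independent blocks, randomize within each bucket, and observe that because $\p_{\mathrm{VV}},\q_{\mathrm{VV}}$ agree on the first $r$ moments, any block receiving at most $r$ samples is perfectly uninformative. A generalized birthday bound then gives $\Omega(m^{1-1/(r+1)})$; choosing $r=\Theta(\log(1/\delta))$ (hence $k=\Theta(1/\delta)$ via the VV construction) yields $\Omega(\delta^2 n^{1-O(1/\log(1/\delta))})$. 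The step you are missing is precisely this embedding of a small moment-matching pair into a permutation-promise instance with a provable TV gap.
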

We also show that relaxing the tolerance allowed from additive (as in the usual tolerant testing setting) to multiplicative in the distance parameter does not really help, as the sample complexity still remains polynomial:
\begin{theorem}[{\cref{thm:tol-mult-main}, (Informal)}]
Multiplicative-factor tolerant identity testing under promise of permutation, where one needs to distinguish between $\dst$-close and $C\dst$-far, has sample complexity $\bigOmega{\sqrt{n}}$ for any constant factor $C> 1$, where $n$ is the domain size.
\end{theorem}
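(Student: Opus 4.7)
The plan is to apply Le Cam's two-point method via a Paninski-style construction at the distance scale $\eps = n^{-1/4}$, so that the standard $\Theta(\sqrt{n}/\eps^2)$ lower bound for such paired constructions specializes to $\Omega(\sqrt{n})$.

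I would take $n$ even, set $\eps \eqdef n^{-1/4}$ and $\delta \eqdef 2C\eps$, and choose the reference $\q$ to be the paired distribution $\q(2i-1) = (1+\delta)/n$, $\q(2i) = (1-\delta)/n$ for $i \in [n/2]$. Every permutation of $\q$ is indexed (up to reorderings of equal masses) by a vector $\sigma \in \{0,1\}^{n/2}$ recording which pairs are swapped: put $\p_\sigma(2i-1) = (1+(-1)^{\sigma_i}\delta)/n$, so that $\totalvardist{\p_\sigma}{\q} = 2\delta\abs{\sigma}_1/n$. The two candidate priors $\Pi_Y, \Pi_N$ are i.i.d.\ product priors with $\sigma_i \sim \operatorname{Ber}(p_Y)$ at rate $p_Y = \eps/\delta = 1/(2C)$ (yes case) and $\sigma_i \sim \operatorname{Ber}(p_N)$ at rate $p_N = C\eps/\delta = 1/2$ (no case); their expected distances to $\q$ are $\eps$ and $C\eps$ respectively.

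The heart of the proof is a chi-squared calculation after Poissonization (which affects the sample complexity by only constants). Under the Poisson model and the product priors, the counts $(N_{2i-1}, N_{2i})$ at different pairs are independent, so the full sample distributions factor as $\mu_Y^{\text{Poi}} = \bigotimes_{i=1}^{n/2} \mu_{Y,i}$ (and similarly $\mu_N^{\text{Poi}}$), where each $\mu_{Y,i}$ is a two-component mixture of the Poisson product $A \eqdef \operatorname{Poi}(\alpha) \otimes \operatorname{Poi}(\beta)$ and its swap $B$, with $\alpha \eqdef m(1+\delta)/n$ and $\beta \eqdef m(1-\delta)/n$, mixing weight $p_Y$; likewise $\mu_{N,i}$ uses weight $p_N$. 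A short closed-form computation gives
\[
  \chi^2(B, A) \;=\; \exp\mleft(\tfrac{(\alpha+\beta)(\alpha-\beta)^2}{\alpha\beta}\mright) - 1 \;=\; (1+o(1))\cdot\tfrac{8m\delta^2}{n},
\]
and the elementary inequality $\mu_{N,i} = (A+B)/2$ (exact for $p_N = 1/2$) combined with $A+B \geq A$ yields $\chi^2(\mu_{Y,i}, \mu_{N,i}) \leq 2(p_Y - p_N)^2\,\chi^2(B, A) = O(m(C-1)^2\eps^2/n)$. Tensorizing via $\chi^2(\bigotimes_i \mu_{Y,i}, \bigotimes_i \mu_{N,i}) = \prod_i (1 + \chi^2_i) - 1$ across the $n/2$ pairs, the total chi-squared is $O(m(C-1)^2\eps^2) = O(m(C-1)^2/\sqrt{n})$, which is $o(1)$ whenever $m = o(\sqrt{n})$. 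Via $\totalvardist \leq \tfrac12\sqrt{\chi^2}$, no tester on $o(\sqrt{n})$ samples can succeed with constant advantage.

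The main technical obstacle is passing from the product priors (which give a clean tensorization but only guarantee the distance constraint in expectation) to genuinely legal Yes/No instances (whose \emph{every} realization must satisfy $\totalvardist{\p_\sigma}{\q} \leq \eps$ or $>C\eps$, respectively). I would handle this by slightly shifting the Bernoulli rates ($p_Y' = p_Y(1 - \eta)$ and $p_N' = p_N(1 + \eta)$ for a small $\eta = n^{-\alpha}$ with $\alpha$ tuned appropriately), so that Chernoff bounds make the one-sided distance constraints hold with probability $1-o(1)$ under each perturbed product prior; conditioning on these good events only inflates the per-pair chi-squared by a factor of $1 + o(1)$, preserving the bound.
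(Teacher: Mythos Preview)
Your approach is correct and proves the stated informal theorem, but it takes a genuinely different route from the paper.

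The paper builds a reference $\refdist$ with $C+1$ geometric buckets (sizes doubling, probabilities halving) and exhibits two explicit permutations: a ``close'' one that only rearranges the extreme buckets, and a ``far'' one that cascades mass across all buckets. Both have identical bucket-mass profiles, so after randomly permuting inside each bucket the two families are indistinguishable unless two samples land in the same bucket; a birthday bound then gives $\Omega(\sqrt{n/4^C})$ samples. Crucially, this works at a \emph{constant} distance scale $\eps = 1/(4C-1)$, and the exponential-in-$C$ loss is intentional: it matches the tester's tolerance at $C=\Theta(\log n)$ (see the paper's Remark after~\cref{thm:tol-mult-main}).

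Your construction is the Paninski paired instance run at two different mixing rates, analyzed via a clean per-pair $\chi^2$ tensorization. It is more elementary and even yields a better dependence on $C$ (polynomial rather than exponential), but it only achieves the $\Omega(\sqrt{n})$ bound at the vanishing scale $\eps=n^{-1/4}$; at constant $\eps$ your total $\chi^2$ is $\Theta(m)$ and gives nothing. So the two arguments are complementary: the paper's is stronger in the $\eps$-regime and tight against the upper bound in the large-$C$ limit, while yours is shorter and gives a cleaner constant-$C$ statement.

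One small fix: your final paragraph's phrasing (``conditioning on these good events only inflates the per-pair chi-squared'') is off, since conditioning on a global Hamming-weight event breaks the product structure across pairs. The standard and cleaner route is to run the $\chi^2$ tensorization directly on the \emph{shifted} product priors $p_Y',p_N'$ (the shift only perturbs $(p_N-p_Y)^2$ and the lower bound $\mu_{N,i}'\geq c(A+B)$ by $1+o(1)$ factors), and then absorb the $o(1)$-probability bad events into the tester's error rather than into the $\chi^2$.
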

We emphasize once more that those results, and in particular the lower bounds, do not follow from the known results on standard identity testing, as the promise of equality up to permutation, by strenghtening the premise, drastically changes the problem. In particular, the case where the reference $\q$ is uniform, while known to be the hardest case for identity and tolerant identity testing, is actually a trivially easy case under our promise (as any distribution promised to be a permutation of the uniform distribution is, of course, the uniform distribution itself.)

\subsection{Previous work}
Distribution testing has a long history in Statistics, that one can trace back to the work of Pearson~\cite{Pearson00}. More recently, from the computer science perspective, Goldreich, Goldwasser, and Ron initiated the field of property testing~\cite{GoldreichGR98}; of which distribution testing emerged through the seminal work of Batu, Fortnow, Rubinfeld, Smith, and White~\cite{BatuFRSW00}. We refer the read to the survey~\cite{Canonne20} for a review of the area of distribution testing.

Among the problems tackled in this field, \emph{identity testing} (also known as goodness-of-fit or one-sample testing), in which the goal is to decide whether an unknown probability distribution $\p$ is equal to a purported model $\q$, has received significant attention. It is known that for identity testing with any reference distribution $\q$ over a domain of size $n$, $\Theta(\sqrt{n})$ samples are necessary and sufficient~\cite{Paninski08,ChanDVV14,AcharyaDK15,ValiantV17}; moreover, the exact asymptotic dependence on the distance parameter and the probability of error of the test~\cite{HuangM13,DiakonikolasGPP18}, as well as some good understanding of the dependence on the reference distribution $\q$ itself~\cite{ValiantV17,BlaisCG19}, are now understood. Further, we also have tight bounds for the harder problem where one seeks to allow for some noise in the data (i.e., perform \emph{tolerant} identity testing, where the algorithm has to accept distributions sufficient close to the reference $\q$): $\Theta(n/\log n)$ samples, a nearly linear dependence on the domain size, are known to be necessary and sufficient~\cite{valiantvaliant:10lb,valiantvaliant:10ub,ValiantV11,JiaoHW18}.  

However, how the identity testing problem changes under natural constraints on the input data,  or under some variations of the formulation, remains largely unexplored. Among the works concerned with such problems,~\cite{bkr:04,ddsv:13} consider identity testing under monotonicity or $k$-modality constraints; and~\cite{DiakonikolasKN15a} focuses on a broad class of shape constraints on the density. Finally,~\cite{CanonneW20} focuses on a variant of identity testing, ``identity up to binning,'' where two distributions are considered equal if some binning of the domain can make them coincide. To the best of our knowledge, the question considered in the present work, albeit arguably quite natural, has not been previously considered in the Statistics or distribution testing literature.\medskip

\noindent\textbf{Organization} We provide in~\cref{sec:testing-upper-bound} our algorithm for testing identity under promise of permutation, before complementing it in~\cref{sec:testing-lower-bound} by our matching lower bound.~\cref{sec:toleranttesting} is then concerned with the upper and lower bounds for the tolerant version of the problem; the bulk of which lies in proving the two lower bounds.

\section{Preliminaries}
    \label{sec:prelim}
Let $\mathcal{S}_n$ denote the set of permutations of $[n]\eqdef \{1,2,\dots,n\}$. We identify a probability distribution $\p$ over $[n]$ with its probability mass function (pmf), that is, a function $\p\colon[n]\to[0,1]$ such that $\sum_{i=1}^n \p(i)=1$. For a subset $S\subseteq [n]$, we then write $\p(S)=\sum_{i\in S} \p(i)$ for the probability mass assigned to $S$ by $\p$. Given two probability distributions $\p,\q$ over $[n]$, their total variation distance is
\begin{equation}
    \totalvardist{\p}{\q} = \sup_{S\subseteq [n]} (\p(S)-\q(S)) = \frac{1}{2}\normone{\p-\q}\,
\end{equation}
where $\normone{\p-\q} = \sum_{i=1}^n \abs{\p(i)-\q(i)}$ is the $\lp[1]$ distance between the two pmfs. 
In what follows, given a probability distribution $\q$ over $[n]$, we define
\begin{equation}
  \Pi_n(\q) \eqdef \setOfSuchThat{ \q\circ\pi }{ \pi\in\mathcal{S}_n },
\end{equation}
the set of distributions equal to $\q$ up to permutation of the domain.

Finally, we will rely on the so-called DKW inequality, which roughly states that $O(1/\dst^2)$ samples from any univariate distribution suffice to learn it to Kolmogorov distance $\dst$ with high probability:
this is a result due to Dvoretzky, Kiefer, and Wolfowitz from 1956~\cite{DKW:56} (with the optimal constant due to Massart, in 1990~\cite{Massart:90}).
\begin{theorem}[DKW Inequality]
  \label{theo:dkw}
  Let $\hat{\p}$ denote the empirical distribution on $m$ i.i.d.\ samples from an \emph{arbitrary} distribution $\p$ on $\R$. Then, for every $\dst > 0$,
  \[
      \probaOf{ \kolmogorov{\hat{\p}}{\p} > \dst } \leq 2e^{-2m \dst^2 }\,,
  \]
  where, for two univariate distributions $\p,\q$, $\kolmogorov{\p}{\q} = \sup_{x\in \R} \abs{ \p((-\infty,x]) - \q((-\infty,x])}$ denotes the Kolmogorov distance between $\p$ and $\q$.
\end{theorem}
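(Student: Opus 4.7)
The plan is to establish the stated two-sided bound by first proving the one-sided deviation inequality $\probaOf{\sup_{x\in \R} (\hat{\p}((-\infty,x]) - \p((-\infty,x])) > \dst} \leq e^{-2m\dst^2}$, and then obtaining the two-sided version by applying the same inequality to the reversed process $\p - \hat{\p}$ and taking a union bound, which produces the factor $2$ out front.

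The first step is a reduction to the uniform distribution on $[0,1]$. If $F$ denotes the CDF of $\p$ and $X_1,\ldots,X_m$ are i.i.d.\ samples from $\p$, then in the continuous case the variables $U_i \eqdef F(X_i)$ are i.i.d.\ uniform on $[0,1]$, and the Kolmogorov distance between the empirical CDF of the $X_i$'s and $F$ equals the Kolmogorov distance between the empirical CDF $\hat{G}_m$ of the $U_i$'s and the identity function $t \mapsto t$ on $[0,1]$. A standard tie-breaking randomization handles the general (not necessarily continuous) case. Hence it suffices to prove the one-sided bound when $\p$ is the uniform distribution on $[0,1]$.

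For this reduced problem, the one-sided supremum is attained at an order statistic: with $U_{(1)}\leq \cdots \leq U_{(m)}$ the sorted samples, the event $\{\sup_{t\in[0,1]}(\hat{G}_m(t) - t) > \dst\}$ coincides with $\{\exists k\colon U_{(k)} < k/m - \dst\}$, so the task reduces to bounding a first-passage probability for the path $k \mapsto U_{(k)} - k/m$. Massart's sharp argument achieves this through a reverse-time representation of the uniform order statistics (via a process with Beta-distributed ratios), combined with an optimized exponential-moment computation. A weaker form with the same $e^{-2m\dst^2}$ rate, but an extra $\poly\log(1/\dst)$ pre-factor, follows much more easily by discretizing $[0,1]$ into $\Theta(1/\dst)$ grid points, applying Hoeffding's inequality pointwise, and taking a union bound over the grid.

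The hard part is obtaining the exact constant $2$ both in the pre-factor and inside the exponent. The naive discretize-and-union route inevitably loses a logarithmic slack, because it ignores the joint structure of the empirical process; eliminating this slack requires either Massart's combinatorial identity (in the flavor of the cycle lemma / ballot problem) or an analogue of the reflection principle for the Brownian bridge. By contrast, the reduction to the uniform distribution and the passage from one-sided to two-sided tails are routine, so essentially the full weight of the argument sits in the tight first-passage analysis for the uniform order statistics.
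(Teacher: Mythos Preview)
The paper does not prove this statement: it is stated in the preliminaries as a classical result, attributed to Dvoretzky, Kiefer, and Wolfowitz (1956) with the sharp constant due to Massart (1990), and then invoked as a black box in the upper-bound analysis of the testing algorithm. There is therefore no in-paper proof to compare your proposal against.

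That said, your outline is a faithful high-level description of the standard route to the sharp DKW inequality: the probability-integral-transform reduction to the uniform case, the reformulation of the one-sided supremum in terms of the order statistics, the identification of Massart's tight first-passage (ballot-type) argument as the crux, and the passage from one-sided to two-sided by symmetry and a union bound. Your remark that a naive discretize-and-Hoeffding approach loses at least a logarithmic factor is accurate and correctly isolates where the real work lies. As written, however, this is a sketch rather than a proof: the exponential-moment computation (or combinatorial identity) that actually delivers the precise constant in the exponent and the pre-factor is only alluded to, not carried out.
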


\section{Testing}
In this section, we establish our matching upper and lower bounds for testing under promise of permutation,~\cref{theo:toltesting:ub,theo:testing:lb}.
\subsection{Upper bound}
    \label{sec:testing-upper-bound}
We begin by proving our $O(\log^2 n)$ upper bound for identity testing under promise of permutation.
\begin{theorem}
  \label{theo:ub:testing}
  There exists an algorithm (\cref{algo:testing}) which, for any reference distribution $\q$ over $[n]$ and any $0< \dst\leq 1$, given $\bigO{\frac{\log^2 n}{\dst^4}}$ samples from an unknown distribution $\p\in \Pi_n(\q)$, distinguishes with probability at least $2/3$ between (i)~$\p=\q$ and (ii)~$\totalvardist{\p}{\q} > \dst$.
\end{theorem}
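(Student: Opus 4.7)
The plan is to estimate a suitably bounded proxy of the log-likelihood of the samples under $\q$, and exploit the permutation promise via Pinsker's inequality together with standard concentration. Concretely, I would take $m = \bigO{\log^2 n / \eps^4}$ i.i.d.\ samples $X_1, \ldots, X_m$ drawn from $\p$, and form the empirical average
\[
  \hat{L} \eqdef \frac{1}{m} \sum_{k=1}^m g(X_k)
\]
for a bounded proxy $g\colon[n]\to\R$ with $g(i) \approx \log(1/\q(i))$; the algorithm accepts when $\hat L$ is close to the reference value $L_\q \eqdef \sum_i \q(i) g(i)$ (which depends only on $\q$ and can be precomputed), and rejects otherwise.

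The soundness rests on two observations. First, the permutation promise $\p \in \Pi_n(\q)$ forces the Shannon entropies of $\p$ and $\q$ to coincide, since the two distributions share the same multiset of probability values. Setting $L_\p \eqdef \mathbb{E}_\p[g(X)]$ and momentarily ignoring the truncation error in $g$, the identity
\[
  \mathbb{E}_\p[-\log \q(X)] - H(\q) = H(\p) + \kldiv{\p}{\q} - H(\q) = \kldiv{\p}{\q}
\]
combined with Pinsker's inequality yields $L_\p - L_\q \geq 2\, \totalvardist{\p}{\q}^2 > 2\eps^2$ in the far case, providing a deterministic signal of magnitude $\Omega(\eps^2)$ between the two hypotheses. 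Second, since $g$ is bounded by $\bigO{\log n}$ by construction, $\var_\p(g(X)) = \bigO{\log^2 n}$, and Chebyshev's inequality shows that $m = \bigOmega{\log^2 n / \eps^4}$ samples suffice to control $|\hat L - L_\p|$ at the scale $\eps^2$ with constant probability, hence to distinguish the two cases.

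The main obstacle is designing $g$ so that it is simultaneously bounded by $\bigO{\log n}$ and faithful enough to $\log(1/\q(\cdot))$ to preserve the $\Omega(\eps^2)$ signal; this is delicate when $\q$ has ``tail'' elements with $\q(i) \ll 1/n$, where $\log(1/\q(i))$ would be unbounded but, under the permutation promise, $\p$ may still concentrate significant mass at such positions (if $\pi$ maps heavy $\q$-values there). I would take $J = \bigO{\log(n/\eps)}$ geometric thresholds inside $[\eps^2/n, 1]$ and define $g(i)$ to be a suitably scaled count of the thresholds below $\q(i)$, so that $g$ approximates $\log(1/\q(i))$ within $o(\eps^2)$ per element on the ``bulk.'' The residual contribution from the tail $T = \{i : \q(i) < \eps^2/n\}$ I would then handle by an auxiliary bucket-mass check that rejects whenever the empirical mass $\hat\p(T)$ deviates from $\q(T)$ by more than $\eps^2/\log n$; on the event that this side-check passes, the truncation bias on $L_\p - L_\q$ is $o(\eps^2)$ and the signal-noise analysis above goes through unchanged, yielding the claimed $\bigO{\log^2 n / \eps^4}$ sample complexity.
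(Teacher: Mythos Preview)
Your entropy-based route is genuinely different from the paper's and, once the tail argument is repaired, does give the claimed bound. The paper proceeds combinatorially: it partitions $[n]$ into $L=O(\log(n/\dst)/\dst)$ geometric buckets by $\q$-value, learns the induced $L$-dimensional bucket distribution to Kolmogorov distance $O(\dst/L)$ via the DKW inequality, and proves a structural lemma showing that whenever $\totalvardist{\p}{\q}>\dst$ some \emph{suffix} of buckets must have mass discrepancy at least $\dst/(4L)$. Your approach instead exploits the permutation promise analytically, via $H(\p)=H(\q)$ and Pinsker; this is arguably cleaner and avoids the combinatorial suffix lemma entirely. What the paper's approach buys in exchange is a small amount of built-in tolerance (it accepts any $\p$ that is $O(\dst/\log n)$-close to $\q$; see their Remark~3.3), which your test does not obviously provide.

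That said, your tail handling is not correct as written. The assertion that ``on the event that this side-check passes, the truncation bias on $L_\p-L_\q$ is $o(\eps^2)$'' is false: even with $\p(T)=\q(T)$ exactly, the gap $(L_\p-L_\q)-\kldiv{\p}{\q}$ can be arbitrarily large. For instance, if $\q$ has two tail elements of masses $a=e^{-n^2}$ and $b=\eps^2/(2n)$ and $\pi$ swaps them (while also permuting bulk elements to create the $\eps$ of TV distance), the exact KL picks up a term $b\log(b/a)=\Theta(n\eps^2)$ that the truncated statistic does not see, so the ``bias'' is $\Theta(n\eps^2)$; and of course $\kldiv{\p}{\q}$ is not even finite when $\q$ has zero-probability elements. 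The correct completion is to abandon the comparison to $\kldiv{\p}{\q}$ altogether and compare to the \emph{truncated} reference. With $g(i)=-\log\bar\q(i)$ for $\bar\q(i)\eqdef\max(\q(i),\eps^3/n)$ and $\bar\q'\eqdef\bar\q/\normone{\bar\q}$, the same entropy identity gives exactly
\[
L_\p-L_\q \;=\; \kldiv{\p}{\bar\q'}-\kldiv{\q}{\bar\q'}\,.
\]
Since $\normone{\bar\q}\le 1+\eps^3$ one has $\kldiv{\q}{\bar\q'}\le\log\normone{\bar\q}\le\eps^3$ and hence $\totalvardist{\q}{\bar\q'}=O(\eps^{3/2})$; by the triangle inequality and Pinsker, $\kldiv{\p}{\bar\q'}\ge 2\bigl(\eps-O(\eps^{3/2})\bigr)^2\ge\eps^2/2$, so $L_\p-L_\q\ge\eps^2/4$ in the far case \emph{unconditionally}, with no side check needed. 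The variance and Chebyshev steps then go through exactly as you wrote, since $g$ is still bounded by $\log(n/\eps^3)=O(\log(n/\eps))$.
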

\begin{proof}
We first partition the domain into $L\eqdef \bigO{\log(\ab/\dst)/\dst}$ \emph{buckets} $B_1,\dots,B_{L}$, where
\begin{equation}
  \label{eq:bucketing:ub}
    B_\ell \eqdef \setOfSuchThat{ i\in[\ab] }{ \frac{1}{(1+\dst/4)^{\ell}} < \q(i) \leq \frac{1}{(1+\dst/4)^{\ell-1}} }, \qquad 1\leq \ell\leq L-1
\end{equation}
and $B_L \eqdef \setOfSuchThat{ i\in[\ab] }{ \q(i) \leq \frac{1}{(1+\dst/4)^{L-1}} }$. Note that since $\q$ is known, we can exactly compute the partition $B_1,\dots,B_L$, and in particular those $L$ sets can be efficiently obtained.

\begin{algorithm}[ht]
  \begin{algorithmic}[1]
    \Require Reference distribution $\q$, distance parameter $\dst\in(0,1]$, sample access to $\p\in\Pi_n(\q)$
    \State Set $L \gets 1+ \clg{\frac{\log(4n/\dst)}{\log(1+\dst/4)}} = \bigO{\frac{\log(n/\dst)}{\dst}}$,  $\delta \gets \frac{\dst}{4(L-1)}$
    \State Compute the bucketing $B_1,\dots,B_L$, as in~\eqref{eq:bucketing:ub}
    \State Using $\bigO{1/\delta^2}$ samples from $\p$, use the empirical estimator to learn the distribution 
    \[
        \bar{\p} \eqdef (\p(B_1),\dots,\p(B_L))
    \]
    over $[L]$ to Kolmogorov distance $\frac{\delta}{3}$, with probability of error $1/10$. Let $\hat{\p}$ be the output.
    \If{ $\hat{\p}(B_L) > \frac{3\dst}{8}$ or there exists $\ell^\ast$ such that $|\hat{\p}(\{\ell^\ast,\dots,L-1\}) - \q(\bigcup_{\ell=\ell^\ast}^{L-1} B_\ell)| > \frac{\delta}{3}$ }
      \State \Return \no
    \Else
      \State \Return \yes
    \EndIf
  \end{algorithmic}
  \caption{Algorithm for identity testing under promise of permutation}\label{algo:testing}
\end{algorithm}

For our choice of $L$, $\frac{1}{(1+\dst/4)^{L-1}} \leq \frac{\dst}{4\ab}$, so the last bucket $B_L$ has small probability mass under the reference distribution: $\q(B_L)\leq \frac{\dst}{4}$. Now, distinguishing with high constant probability between $\p(B_L) \leq \frac{\dst}{4}$ and $\p(B_L) \geq \frac{\dst}{2}$ can be done with $O(1/\dst)$ samples, so we can detect a discrepancy in $B_L$ with high probability if there is one (we will argue this part formally at the end of the proof). Consequently, we hereafter assume that $\p(B_L) < \frac{\dst}{2}$.

If $\p=\q$, clearly $\p(B_L)\leq \frac{\dst}{4}$ (so the first check above passes) and $\q(B_\ell)=\p(B_\ell)$ for all $1\leq \ell\leq L-1$. However, if $\totalvardist{\p}{\q} > \dst$, then $\sum_{\ell=1}^{L-1} \sum_{i\in B_\ell}\abs{\p(i)-\q(i)} > 2\dst-\frac{3\dst}{4} = \frac{5}{4}\dst$. Moreover, letting  $\pi\in\mathcal{S}_\ab$ be the permutation such that $\p=\q\circ\pi$, consider the set $S\subseteq[n]$ of elements which $\pi$ maps to an element from the same bucket:
\[
    S \eqdef \setOfSuchThat{ i \in [n]\setminus B_L }{ \exists \ell \in [L-1],\, i\in B_\ell,\pi(i) \in B_\ell }\,.
\]
For each such element $i$, by definition of the bucketing, $\abs{\p(i)-\q(i)} =  \abs{\q(i)-\q(\pi(i))} \leq \frac{\dst}{4}\q(i)$. It follows that the elements from $S$ amount for a total $\lp[1]$ distance of at most $\frac{\dst}{4}$, and therefore a constant fraction of the distance between $\p$ and $\q$ comes from the set $T\eqdef[n]\setminus (B_L\cup S)$ of elements that $\pi$ ``moves to a different bucket:''
\[
    \frac{5}{4}\dst 
        < \sum_{i\in S} \abs{\p(i)-\q(i)} + \sum_{i\in T} \abs{\p(i)-\q(i)}
        \leq \frac{\dst}{4}\q(S) + \sum_{i\in T} \abs{\p(i)-\q(i)}
        \leq \frac{\dst}{4} + \sum_{i\in T} \abs{\p(i)-\q(i)}
\]
that is, $\sum_{i\in T} \abs{\p(i)-\q(i)} > \dst$.

Partition the set $T$ by setting $T_\ell \eqdef T\cap B_\ell$, for $\ell\in[L-1]$. Rewriting the above inequality, we obtained that
\begin{equation}
  \label{eq:testing:proof}
    \sum_{i\in T} \abs{\p(i)-\q(i)} = \sum_{\ell=1}^{L-1}\sum_{i\in T_\ell} \abs{\p(i)-\q(i)} > \dst\,.
\end{equation}
We will use this to prove the following result.
\begin{claim}
  \label{claim:test:distance}
  Suppose that $\totalvardist{\p}{\q} > \dst$. Then there exists some $\ell^\ast\in[L-1]$ such that $\abs{\p(\bigcup_{\ell=\ell^\ast}^{L-1} B_\ell) - \q(\bigcup_{\ell=\ell^\ast}^{L-1} B_\ell)} > \delta$, where $\delta = \frac{\dst}{4(L-1)}$.
\end{claim}
\begin{proof}
We note that since $\p(B_\ell) = \p(S_\ell)+\p(T_\ell)$ and that $\p(S_\ell)=\q(S_\ell)$ (by definition of $S_\ell\subseteq S$) for every $\ell$, it suffices to prove the statement for $T_\ell$, that is, that there exists $\ell^\ast$ such that
\[
  \abs{ \p(\bigcup_{\ell=\ell^\ast}^{L-1} T_\ell) - \q(\bigcup_{\ell=\ell^\ast}^{L-1} T_\ell) } > \delta\,.
\]
The key property we will use is that, for every $\ell < \ell'$, we have $\q(i) \geq \q(j)$ for every $i\in B_\ell, j\in B_{\ell'}$. This property, which follows from the definition of bucketings, guarantees that if $\pi$ maps an element $i\in T_{\ell'}$ to element $\pi(i)\in T_{\ell}$, then $\p(i) \geq \q(i)$. 

Let $U,V \subseteq [L-1]$ be the buckets whose probability mass under $\p$ is greater than or equal to (resp., less than or equal to) the probability mass under $\q$, i.e.,
\[
    U \eqdef \setOfSuchThat{ \ell \in[L-1] } { \p(T_\ell) \geq \q(T_\ell) }, \quad V \eqdef \setOfSuchThat{ \ell \in[L-1] } { \p(T_\ell) \leq \q(T_\ell) }
\]
This lets us rewrite~\eqref{eq:testing:proof} as
\[
    \dst < \sum_{\ell\in U}\sum_{i\in T_\ell} \abs{\p(i)-\q(i)} + \sum_{\ell\in V}\sum_{i\in T_\ell} \abs{\p(i)-\q(i)}
\]
and so at least one of the two terms in the RHS must exceed $\frac{\dst}{2}$. Without loss of generality, suppose $\sum_{\ell\in U}\sum_{i\in T_\ell} \abs{\p(i)-\q(i)} > \frac{\dst}{2}$. This implies there exists $\ell^\ast\in U$ such that $\sum_{i\in T_{\ell^\ast}} \abs{\p(i)-\q(i)} > \frac{\dst}{2(L-1)}$; we will focus on this $\ell^\ast$.\medskip

Partition $T_{\ell^\ast}$ further into $T^+_{\ell^\ast}$ and $T^-_{\ell^\ast}$, where $T^+_{\ell^\ast}$ (resp. $T^-_{\ell^\ast}$) is the set of elements $i\in T_{\ell^\ast}$ such that $\pi(i)$ belongs to a bucket $B_\ell$ with $\ell < \ell^\ast$ (resp., $\ell > \ell^\ast$). Note that, for any $i\in T^+_{\ell^\ast}$, we then have $\p(i) = \q(\pi(i)) \geq \q(i)$, and conversely for $i\in T^-_{\ell^\ast}$: so that we can rewrite the above as
\[
    \frac{\dst}{2(L-1)} 
    <  (\p(T^+_{\ell^\ast})-\q(T^+_{\ell^\ast})) + (\q(T^-_{\ell^\ast})-\p(T^-_{\ell^\ast}))
\]
Now, since $\p(T_{\ell^\ast}) \geq \q(T_{\ell^\ast})$ (as $\ell^\ast\in U$), we have $\p(T^+_{\ell^\ast})-\q(T^+_{\ell^\ast}) \geq \q(T^-_{\ell^\ast})-\p(T^-_{\ell^\ast})$ and therefore
$\p(T^+_{\ell^\ast})-\q(T^+_{\ell^\ast}) > \frac{\dst}{4(L-1)}$. This implies the claim: indeed, we then have
\[
    \p(\cup_{\ell=\ell^\ast}^{L-1} T_\ell) > \q(\cup_{\ell=\ell^\ast}^{L-1} T_\ell) + \frac{\dst}{4(L-1)}
\]
since $T_{\ell^\ast}$ ``receives'' a difference of at least $\frac{\dst}{4(L-1)}$ probability mass from lower-index buckets, and besides this the total probability mass of the suffix of buckets $\cup_{\ell=\ell^\ast}^{L-1} T_\ell$ cannot decrease by any internal swap of elements.\qedhere
\end{proof} 
With the above claim in hand, we can conclude the analysis. Indeed, as the sets $B_1,\dots,B_{L}$ are known, one can estimate the induced probability distribution 
$
    \bar{\p} \eqdef (\p(B_1),\dots,\p(B_L))
$
to Kolmogorov distance $\frac{\delta}{3}$ (with probability at least $9/10$) using $O(1/\delta^2) = O(L^2/\dst^2) =O(\log^2(n/\dst)/\dst^4)$ samples (this follows from~\cref{theo:dkw}). Let $\hat{\p}$ be the resulting distribution over $[L]$. Whenever this step is successful (i.e., with probability at least $9/10$, the following holds.
\begin{itemize}
  \item If $\p=\q$, then $|\hat{\p}(L)-\q(B_L)| \leq \frac{\delta}{3}$, so $\hat{\p}(B_L)\leq \frac{\dst}{4}+\frac{\delta}{3} \leq \frac{3}{8}\dst$; and 
$|\hat{\p}(\{\ell^\ast,\dots,L-1\}) - \q(\bigcup_{\ell=\ell^\ast}^{L-1} B_\ell)| \leq \frac{\delta}{3}$ for all $\ell$. Thus, the test accepts.
  \item If $\totalvardist{\p}{\q} > \dst$, then either
    \begin{itemize}
        \item $\p(B_L) > \frac{\dst}{2}$, in which case $\hat{\p}(L) \geq \frac{\dst}{2} - \frac{\delta}{3} > \frac{3}{8}\dst$ and the test rejects; or
        \item $\p(B_L) \leq \frac{\dst}{2}$, in which case by~\cref{claim:test:distance} there exists some $\ell^\ast\in[L-1]$ such that 
        $\abs{\p(\bigcup_{\ell=\ell^\ast}^{L-1} B_\ell) - \q(\bigcup_{\ell=\ell^\ast}^{L-1} B_\ell) } > \delta$. 
        Then, 
        \[
            \abs{ \hat{\p}(\{\ell^\ast,\dots, L\}) - \q(\bigcup_{\ell=\ell^\ast}^{L-1} B_\ell) }\geq \abs{\p(\bigcup_{\ell=\ell^\ast}^{L-1} B_\ell) - \q(\bigcup_{\ell=\ell^\ast}^{L-1} B_\ell) }- \frac{\delta}{3} > \frac{2}{3}\delta
        \]
        and the test rejects.
    \end{itemize}
\end{itemize}
This concludes the proof of correctness of the algorithm. The claimed sample complexity readily follows from our choice of $\delta = \bigTheta{L/\dst}$ and the $O(1/\delta^2)$ sample complexity of learning an arbitrary real-valued distribution to Kolmogorov distance $\delta$.
\end{proof}

\begin{remark}[On the tolerance of the tester]
  \label{rk:small:tolerance}
  We note that the above analysis establishes a slightly stronger statement; namely, that the testing algorithm allows for some small tolerance, accepting distributions that are $O(\dst/\log \ab)$-close to $\q$, and rejecting those that are $\dst$-far. As we will see later, this $\Omega(\log \ab)$ factor in the amount of tolerance is essentially optimal, as by~\cref{thm:tol-mult-main} reducing it to $o(\log \ab)$ would require sample complexity $\ab^{1/2-o(1)}$.
\end{remark}

\subsection{Lower bound}
    \label{sec:testing-lower-bound}
In this section, we show that the $O(\log^2 n)$ upper bound from the previous section is tight, by proving a matching lower bound on the sample complexity of identity testing under promise of permutation.
\begin{theorem}
  \label{theo:testing:lb}
  Any algorithm which, given a reference distribution $\q$ over $[n]$, $0< \dst \leq 1$ such that $\dst = \tildeOmega{1/\ab^{1/4}}$, and sample access to an unknown distribution $\p\in \Pi_{\ab}(\q)$, distinguishes with probability at least $2/3$ between (i)~$\p=\q$ and (ii)~$\totalvardist{\p}{\q} > \dst$, must have sample complexity $\bigOmega{\frac{\log^2 \ab}{\dst^2}}$.
\end{theorem}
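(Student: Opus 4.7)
My plan is to apply Le Cam's two-point method in its mixture form (Ingster's trick): I would exhibit a reference distribution $\q$ over $[n]$ and a prior $\mathcal{D}$ on $\mathcal{S}_n$ such that (i) a random $\pi \sim \mathcal{D}$ satisfies $\totalvardist{\q\circ\pi}{\q} > \dst$ with high probability, and (ii) the joint distribution of $m$ i.i.d.\ samples from $\q$ and the $\mathcal{D}$-mixture of $m$ i.i.d.\ samples from $\q\circ\pi$ are $o(1)$-close in total variation whenever $m = o(\log^2 n/\dst^2)$. Taken together, (i) and (ii) imply that no tester can succeed with probability $2/3$ with fewer samples.

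The reference $\q$ I would use has $L = \Theta(\log n)$ ``levels'' of geometrically spaced probabilities (e.g.\ $p_\ell \asymp 2^{-\ell}$), with level sizes $n_\ell$ chosen so that each level carries mass $\Theta(1/L)$; this mirrors the bucketing structure from \cref{theo:ub:testing}. The prior $\mathcal{D}$ would be built by fixing a matching between the elements of adjacent levels $B_\ell$ and $B_{\ell+1}$ and letting $\pi$ independently swap each matched pair with probability $\phi_\ell$. The $\phi_\ell$ are calibrated so that (a) the expected total-variation contribution at level $\ell$ is $\Theta(\dst/L)$, summing to $\Theta(\dst)$ with Chernoff concentration; and, crucially, (b) the first-moment marginals $\mathbb{E}_\pi[(\q\circ\pi)(i)]$ remain pinned to $\q(i)$ up to an $O(\dst/L)$ deviation at every position.

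To bound sample-level indistinguishability, I would Poissonize so that per-position counts become independent conditional on $\pi$. Because the swap coins are independent across pairs and each pair involves only two positions, the chi-squared distance factorizes across swap pairs:
\[
  1 + \chi^2 \;=\; \prod_{\text{pair}}\!\left(1 + \phi_\ell^2\bigl(e^{M\eta_\ell}-1\bigr)\right), \qquad \eta_\ell \;=\; \frac{(p_\ell - p_{\ell+1})^2 (p_\ell + p_{\ell+1})}{p_\ell\, p_{\ell+1}},
\]
with the single-swap signal $\eta_\ell$ computed directly from Poisson pmfs. Taylor-expanding and summing reduces matters to verifying $M \sum_\ell n_\ell \phi_\ell^2 \eta_\ell = o(1)$ under the calibrations (a)--(b).

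The main obstacle is extracting the $\log^2 n$ factor rather than a naive $\Omega(1/\dst^2)$ bound. A uniform choice $\phi_\ell \equiv \Theta(\dst)$ would deliver the TV concentration but would leave a $\Theta(\dst)$ first-moment bias that a per-position count test could detect, giving only $\Omega(1/\dst^2)$. The role of the moment-pinning condition (b)---achievable, e.g., by coupling ``up'' and ``down'' swaps so that linear deviations cancel---is precisely to push the perturbation into higher-order correlations, reducing the distinguishing task to estimating the bucket-level CDF of $\q$ to Kolmogorov precision $\Theta(\dst/\log n)$, which inherits the classical $\Omega(\log^2 n/\dst^2)$ sample-complexity floor for CDF estimation. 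The technical loose ends are designing $\mathcal{D}$ so that (a) and (b) can hold simultaneously, handling the two extremal levels cleanly, and checking that the Taylor expansion is valid (i.e.\ that $M\eta_\ell = o(1)$ across all levels) in the regime enabled by the hypothesis $\dst = \tildeOmega{n^{-1/4}}$.
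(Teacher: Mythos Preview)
There is a genuine gap: the independent-pair-swap prior, analyzed via the product $\chi^2$ you wrote down, yields only $\Omega(1/\dst^2)$, not $\Omega((\log^2 n)/\dst^2)$. Here is the arithmetic. With $n_\ell p_\ell=\Theta(1/L)$ and $p_{\ell+1}=p_\ell/2$ one has $\eta_\ell\asymp p_\ell$, so in the Taylor regime
\[
\chi^2 \;\approx\; M\sum_\ell n_\ell\,\phi_\ell^{2}\,\eta_\ell \;\asymp\; \frac{M}{L}\sum_\ell \phi_\ell^{2}.
\]
The TV requirement reads $\sum_\ell n_\ell\,\phi_\ell\,|p_\ell-p_{\ell+1}|\asymp (1/L)\sum_\ell \phi_\ell=\Theta(\dst)$, i.e.\ $\sum_\ell \phi_\ell=\Theta(L\dst)$; Cauchy--Schwarz then forces $\sum_\ell \phi_\ell^{2}\ge \Theta(L\dst^2)$, whence $\chi^2\gtrsim M\dst^2$ and the bound saturates at $M=O(1/\dst^2)$. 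Note that the $\phi_\ell^{2}$ in your formula already encodes the vanishing of the cross term ($\mathbb{E}_Q[L_\pi L_{\pi'}]=1$ when exactly one of $\pi,\pi'$ swaps), so ``moment-pinning'' cannot squeeze more out of \emph{this} prior. And if you try to enforce (b) by coupling up- and down-swaps at a given position, either that position now sits in two pairs and the product factorization collapses, or you let each position independently draw its new mass from $\{p_{\ell-1},p_{\ell+1}\}$~--- but then the resulting $\p$ is almost surely \emph{not} a permutation of $\q$, violating the promise.

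The paper sidesteps this by correlating the levels in a way your $\chi^2$ factorization cannot accommodate. It fixes $L=\Theta(\log n)$ buckets and builds each $\p$ via a deterministic \emph{cascade}: in every middle bucket $B_\ell$, a uniformly random one-third of the elements is swapped with elements of $B_{\ell-1}$ and the remaining two-thirds with elements of $B_{\ell+1}$, with sizes matched so that $\p(B_\ell)=\q(B_\ell)$ \emph{exactly} for every middle $\ell$. The only mass discrepancy is a deterministic $\pm\Theta(\dst/L)$ at the two end buckets. The analysis is then direct rather than $\chi^2$-based: conditioned on no repeated element, samples falling in the middle buckets are \emph{identically distributed} under $\q$ and under the mixture (by symmetry of the random partition), hence carry zero information; a birthday bound makes this conditioning free below $n^{1/4}/\dst$ samples. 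All that remains is distinguishing $\mathrm{Bernoulli}(1/3)$ from $\mathrm{Bernoulli}(1/3\pm\Theta(\dst/L))$, which needs $\Omega(L^2/\dst^2)$ samples. The point you were missing is that the bucket masses must be preserved \emph{surely}, not merely in expectation, to funnel all detectable signal into the $\Theta(\dst/L)$ endpoint shift you correctly identified as the target.
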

\begin{proof}
We first describe a construction with constant distance $\dst=1/9$, leading to an $\bigOmega{\log^2 n}$ lower bound; before explaining how to obtain the claimed $\bigOmega{\frac{1}{\dst^2}\log^2 n}$ lower bound from it.
Our lower bound will rely on a reference distribution $\q$ piecewise-constant on $L=\bigTheta{\log\ab}$ buckets, where bucket $\ell$ has a number of elements proportional to $2^\ell$. The first and last buckets (that is, the smallest and largest) will each have total probability mass $1/3$ under $\q$, and be uniform. The remaining ``middle'' $L-2$ buckets all have $1/(3(L-2))$ total probability mass, and are uniform as well.  We then build a family of perturbations $\{\p_\pi = \q\circ \pi\}_\pi\subseteq \Pi_n(\q)$, such that under each perturbation $\p_\pi$ the middle buckets keep the exact same total probability mass $1/(3(L-2))$, by ``cascading'' mass from one bucket to the next. Details follow.

Set $L\eqdef \bigTheta{\log \ab}$ to be the largest integer such that $L 2^L \leq \sqrt{\ab}$, and assume for convenience that $\clg{\sqrt{\ab}}$ is a multiple of $3$. The $\ell$th bucket $B_\ell$, for $0\leq \ell\leq L-2$, has size
\[
    |B_\ell| = \clg{\sqrt{\ab}}\cdot 2^\ell
\]
and $|B_{L-1}| = 2(L-2)|B_{L-2}|$, so that 
\[
    \frac{\ab}{8}\leq \sum_{\ell=0}^{L-1} |B_\ell| = \clg{\sqrt{\ab}} \cdot 2^{L-1} \mleft( L-1 \mright)  \leq \ab\,.
\] (We hereafter focus on the first part of the domain, and will ignore the last $\ab-\sum_{\ell=0}^{L-1} |B_\ell|$ elements.) Note that each bucket contains at least $\sqrt{\ab}$ elements by construction, and has a size which is a multiple of $3$. The reference distribution $\q$ is then uniform inside each bucket, where
\begin{itemize}
  \item $\q(B_0)=\q(B_{L-1})=\frac{1}{3}$, and
  \item $\q(B_\ell)=\frac{1}{3(L-2)}$ for all $0 < \ell < L-1$.
\end{itemize}
In particular, our choice of $|B_{L-1}|$ ensures that each element of the last bucket, under $\q$, will have probability mass
\[
  \frac{1}{3|B_{L-1}|} = \frac{1}{2} \cdot \frac{1}{3(L-2)|B_{L-2}|}
\]
that is, half the probability mass of elements of the $(L-2)$th bucket. 

Each perturbation will then have the same distribution over buckets:
\begin{itemize}
  \item each of the $L-2$ middle buckets $B_\ell$ is (independently) partitioned uniformly at random into $3$ sets $S_{\ell,1}, S_{\ell,2}, S_{\ell,3}$ of equal size. The permutation then swaps $S_{\ell,2}\cup S_{\ell,3}$ and $S_{\ell+1,1}$, for $1\leq \ell\leq L-3$ (note that indeed $\abs{S_{\ell,2}\cup S_{\ell,3}} = \abs{S_{\ell+1,1}}$, but $\q(S_{\ell,2}\cup S_{\ell,3}) = 2\q(S_{\ell+1,1})=\frac{2}{9(L-2)}$).
  \item a uniformly random subset $S_0\subseteq B_0$ of size $\frac{|B_1|}{3(2L-5)} = O( |S_{1,1}|/L)$ is selected, and the permutation swaps it with a uniformly random subset $T_1\subseteq S_{1,1}$ of equal size. By choice of the size, we had $\q(S_0) = \frac{2}{9(2L-5)}$ and $\q(T_1) = \frac{1}{9(2L-5)(L-2)}$, so that $\q(S_0)-\q(T_1) = \frac{1}{9(L-2)}$.
  \item similarly, the subset $S_{L-2,2}\cup S_{L-2,3}$ of size $\frac{2}{3}|B_{L-2}| = \frac{|B_{L-1}|}{3(L-2)}$ is swapped with a uniformly random subset $T_{L-1}\subseteq B_{L-1}$ of equal size. By choice of the size, we had $\q(S_{L-2,2}\cup S_{L-2,3}) = \frac{2}{9(L-2)}$ and $\q(T_{L-1}) = \frac{1}{9(L-2)}$, so that again $\q(S_{L-2,2}\cup S_{L-2,3})-\q(T_{L-1}) = \frac{1}{9(L-2)}$.
  	
\end{itemize}
As a result, we get that for each such perturbation $\p=\q\circ\pi$, $\totalvardist{\p}{\q} \geq \frac{1}{9}$. The construction is illustrated in~\cref{fig:lb:testing}.

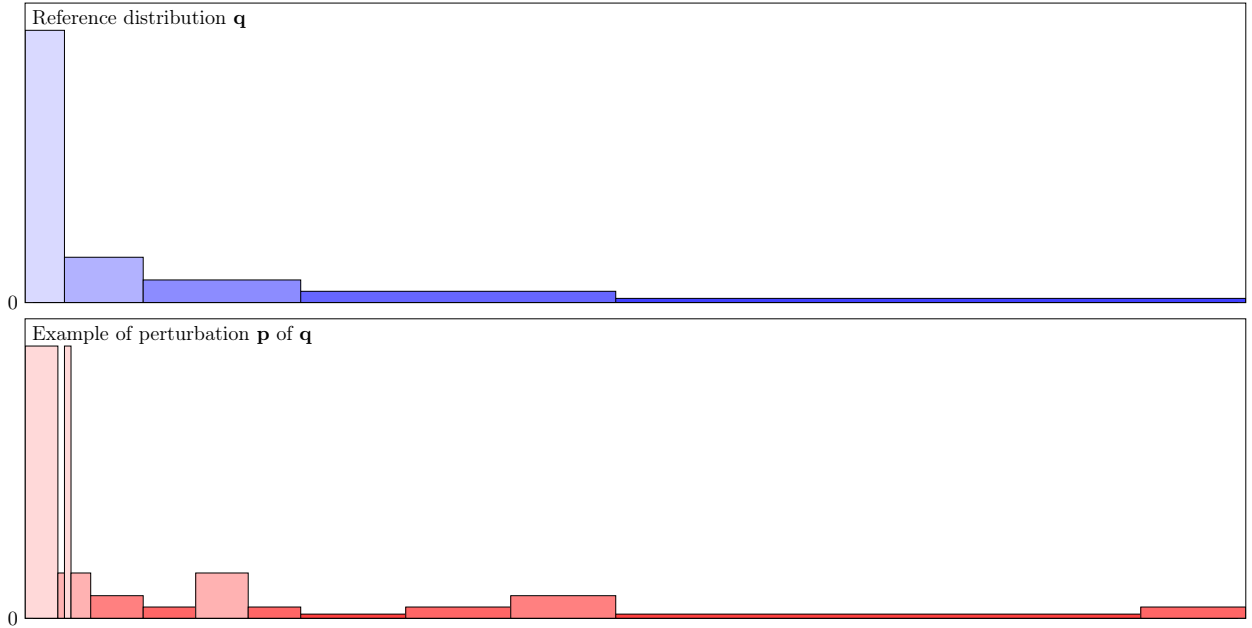
\begin{figure}[ht!]\centering
\begin{tikzpicture}[scale=\scalefigure]
\begin{axis}[
    xmin=0, xmax=186, ymin=0,
    ytick={0}, xtick=\empty,
    width=1.5\textwidth,
    height=\axisdefaultheight,
    area style,
    ]
\node[anchor=north west] at (rel axis cs:0,1) {Reference distribution $\q$};
\addplot+[ybar interval,mark=no,color=blue!15,draw=black] plot coordinates { (3*2*(2^(1-1)-1), 1/3*1/2.0^1) (3*2*(2^1-1), 0) };
\foreach \r in {2,...,4}{
  \pgfmathtruncatemacro{\i}{15*\r}
  \edef\temp{\noexpand%
    \addplot+[ybar interval,mark=no,color=blue!\i,draw=black] plot coordinates { (3*2*(2^(\r-1)-1), 1/9*1/2.0^\r) (3*2*(2^\r-1), 0) };
  }\temp
}
\addplot+[ybar interval,mark=no,color=blue!75,draw=black] plot coordinates { (3*2*(2^(5-1)-1), 1/3*1/2.0^5/4) (3*2*(2^5-1), 0) };
\end{axis}
\end{tikzpicture}

\begin{tikzpicture}[scale=\scalefigure]
\begin{axis}[
    xmin=0, xmax=186, ymin=0,
    ytick={0}, xtick=\empty,
    width=1.5\textwidth,
    height=\axisdefaultheight,
    area style,
    ]
\node[anchor=north west] at (rel axis cs:0,1) {Example of perturbation $\p$ of $\q$};
\addplot+[ybar interval,mark=no,color=red!15,draw=black] plot coordinates { (3*2*(2^(1-1)-1), 1/3*1/2.0^1) (3*2*(2^1-1)-1, 0) };
\addplot+[ybar interval,mark=no,color=red!30,draw=black] plot coordinates { (3*2*(2^1-1)-1, 1/9*1/2.0^2) (3*2*(2^1-1), 0) };

\addplot+[ybar interval,mark=no,color=red!15,draw=black] plot coordinates { (3*2*(2^(2-1)-1), 1/3*1/2.0^1) (3*2*(2^(2-1)-1)+1, 0) };
\addplot+[ybar interval,mark=no,color=red!30,draw=black] plot coordinates { (3*2*(2^(2-1)-1)+1, 1/9*1/2.0^2) (3*2*(2^(2-1)-1)+3*2^2*1/3, 0) };
\addplot+[ybar interval,mark=no,color=red!50,draw=black] plot coordinates { (3*2*(2^(2-1)-1)+2^2, 1/9*1/2.0^3) (3*2*(2^(2-1)-1)+3*2^2*3/3, 0) };

\addplot+[ybar interval,mark=no,color=red!60,draw=black] plot coordinates { (3*2*(2^(3-1)-1), 1/9*1/2.0^4) (3*2*(2^(3-1)-1)+3*2^3*1/3, 0) };
\addplot+[ybar interval,mark=no,color=red!30,draw=black] plot coordinates { (3*2*(2^(3-1)-1)+3*2^3*1/3, 1/9*1/2.0^2) ((3*2*(2^(3-1)-1)+3*2^3*2/3, 0) };
\addplot+[ybar interval,mark=no,color=red!60,draw=black] plot coordinates { ((3*2*(2^(3-1)-1)+3*2^3*2/3, 1/9*1/2.0^4) ((3*2*(2^(3-1)-1)+3*2^3*3/3, 0) };

\addplot+[ybar interval,mark=no,color=red!75,draw=black] plot coordinates { (3*2*(2^(4-1)-1), 1/3*1/2.0^5/4) (3*2*(2^(4-1)-1)+3*2^4*1/3, 0) };
\addplot+[ybar interval,mark=no,color=red!60,draw=black] plot coordinates { (3*2*(2^(4-1)-1)+3*2^4*1/3, 1/9*1/2.0^4) (3*2*(2^(4-1)-1)+3*2^4*2/3, 0) };
\addplot+[ybar interval,mark=no,color=red!50,draw=black] plot coordinates { ((3*2*(2^(4-1)-1)+3*2^4*2/3, 1/9*1/2.0^3) ((3*2*(2^(4-1)-1)+3*2^4*3/3, 0) };

\addplot+[ybar interval,mark=no,color=red!75,draw=black] plot coordinates { (3*2*(2^(5-1)-1), 1/3*1/2.0^5/4) (3*2*(2^5-1)-3*2^4*1/3, 0) };
\addplot+[ybar interval,mark=no,color=red!60,draw=black] plot coordinates { ((3*2*(2^5-1)-3*2^4*1/3, 1/9*1/2.0^4) (3*2*(2^5-1), 0) };
\end{axis}
\end{tikzpicture}
\caption{\label{fig:lb:testing}Reference distribution $\q$ and example of perturbation $\p$, for $L=5$. Note that the total probability mass of each bucket of $\q$ is preserved under $\p$, except for the first and last one whose mass decreases and increases by $\Theta(1/L)$, respectively.} 
\end{figure}

By a birthday paradox-type argument, no element will be sampled twice unless the number of samples is at least $\Omega(1/\sqrt{\sum_{i=1}^n \p(i)^2})=\Omega(1/\sqrt{n \max_{i \in [n]} \p(i)})=\Omega(\ab^{1/4})$, which is far beyond the polylogarithmic regime we are working in.
By construction, under each $\p$, all $L-2$ middle buckets have exactly the same probability mass $\frac{1}{3(L-2)}$, and elements inside are perturbed randomly, either having probability (compared to $\q$) multiplied by $2$ with probability $1/3$ or divided by $2$ with probability $1/3$. Because of the uniformly random choice of the 3-way partition inside each bucket and the fact that each of all those inner partitions are chosen independently across buckets, the information from those $L-2$ buckets does not provide any advantage in distinguishing them from $\p$ unless the same element is hit twice.\footnote{That is, conditioned on seeing each element of those $L-2$ buckets at most once, the conditional distribution over those $L-2$ buckets under (i)~$\q$ and (ii)~the uniform mixture of all perturbations $\p$ are indistinguishable.}

This addresses the case of the middle $L-2$ buckets. Turning to the remaining two, the probability mass of both end buckets, under any perturbation $\p$, deviates from what it is under $\q$ by an additive $\delta\eqdef \frac{1}{9(L-2)}$. Since those buckets each have total probability mass $1/3$ under $\p$ and $1/3\pm\delta$ under each $\q$ and we do not see any collisions with high probability, detecting this requires $\Omega(1/\delta^2)=\Omega(\log^2 \ab)$ samples, giving the lower bound for constant $\dst=1/9$.\smallskip
\ignore{By construction, under each $\p$, all $L-2$ middle buckets have exactly the same probability mass $\frac{1}{3(L-2)}$, and elements inside are perturbed randomly, either having probability (compared to $\q$) multiplied by $2$ with probability $1/3$ or divided by $2$ with probability $1/3$. Because of the uniformly random choice of the 3-way partition inside each bucket and the fact that each of all those inner partitions are chosen independently across buckets, the information from those $L-2$ buckets does not provide any advantage in distinguishing them from $\p$ unless the same element is hit twice.\footnote{That is, conditioned on seeing each element of those $L-2$ buckets at most once, the conditional distribution over those $L-2$ buckets under (i)~$\q$ and (ii)~the uniform mixture of all perturbations $\p$ are indistinguishable.} However, by a birthday paradox-type argument, no element within those $L-2$ buckets will be sampled twice unless the number of samples is at least $\Omega(1/\sqrt{\sum_{\ell=1}^{L-2} \sum_{i\in B_\ell} \p(i)^2})=\Omega(1/\sqrt{\sum_{\ell=1}^{L-2} 1/|B_\ell|})=\Omega(\ab^{1/4})$.
	
	This addresses the case of the middle $L-2$ buckets. Turning to the remaining two, the probability mass of both end buckets, under any perturbation $\p$, deviates from what it is under $\q$ by an additive $\delta\eqdef \frac{1}{9(L-2)}$. Since those buckets each have total probability mass $1/3$ under $\p$ and $1/3\pm\delta$ under each $\q$, detecting this requires $\Omega(1/\delta^2)=\Omega(\log^2 \ab)$ samples, giving the lower bound for constant $\dst=1/9$.\smallskip}

To obtain the inverse quadratic dependence on the distance parameter, one can then simply repeat the above argument for any $0<\dst<1/9$ by replacing our reference distribution $\q$ and all the perturbations $\p_\pi=\q\circ\pi$ by the mixtures
\[
    \q_\dst \eqdef (1-9\dst)\uniform + 9\dst \q, \qquad \p_{\dst,\pi} \eqdef (1-9\dst)\uniform + 9\dst \p_\pi = \q_\dst\circ\pi
\]
the last equality crucially using the fact that the uniform distribution $\uniform$ (over the domain) is invariant by permutation. Note that every such $\p_{\dst,\pi}$ then does belong to $\Pi_n(\q_\dst)$, and is at total variation distance exactly $\dst$ from $\q_\dst$. Moreover, we can repeat the previous argument \emph{mutatis mutandis}: (i) the middle buckets provide no information whatsoever unless an element is seen twice, which requires $\bigOmega{\ab^{1/4}/\dst}$ samples (the extra $1/\dst$ due to our mixture with weight $9\dst$); while the two outer buckets have a discrepancy only $\delta\eqdef \frac{\dst}{L-2}$, which to be detected requires at least $\Omega(1/\delta^2)=\Omega((\log^2 \ab)/\dst^2)$ samples overall. The minimum of these two quantities gives the claimed lower bound, as long as $\ab^{1/4}/\dst =\Omega((\log^2 \ab)/\dst^2)$ , that is, $\dst = \bigOmega{(\log^2 n)/\ab^{1/4}}$.
\end{proof}

\section{Tolerant testing}
    \label{sec:toleranttesting}

We now turn to the task of \emph{tolerant} testing. As mentioned in the introduction, tolerant testing is well known to be harder than standard (non-tolerant) testing, with a nearly quadratic gap for the standard identity testing problem ($\sqrt{n}$ vs. $\frac{n}{\log n}$ sample complexity). Surprisingly, we are able to show that under the promise of permutation, the task does not suffer a merely polynomial blowup -- the sample complexity of tolerant identity testing becomes \emph{exponentially} harder than that of standard testing, jumping from $\log^2 n$ to $n^{1-o(1)}$.

The first component, an $\bigO{{n}/{\log n}}$ upper bound for tolerant testing under promise of permutation (\cref{theo:toltesting:ub}), is straightforward, and simply follows from the corresponding upper bound absent this promise. A much more challenging task is in establishing the lower bound. We actually provide two lower bounds: the first, an $\Omega(n^{1-o(1)})$ lower bound (\cref{theo:toltesting:lb:1}), applies for the usual setting of tolerant testing with an additive gap $\delta$ between $\dst'$ and $\dst$). The second (\cref{thm:tol-mult-main}) is an $\bigOmega{\sqrt{n/2^{O(C)}}}$ sample complexity lower bound for any $C$-factor approximation of the distance, that is to distinguish between $\dst$-close and $C\dst$-far.

\subsection{Upper bound}

The claimed upper bound readily follows from the analogous upper bound on tolerant testing \emph{without} the promise of permutation, due to Valiant and Valiant~\cite[Theorem~4]{ValiantV11} (see, also,~\cite{JiaoHW18}). Indeed, any such estimator can be used for our problem, ignoring the additional promise of identity up to permutation.
\begin{theorem}
  \label{theo:toltesting:ub}
  There exists an algorithm which, for any reference distribution $\q$ over $[n]$ and any $0\leq \eps,\delta\leq 1$ such that $\delta = \bigOmega{1/\sqrt{\log n}}$, and given $\bigO{\frac{n}{\delta^2\log n}}$ samples from an unknown distribution $\p\in \Pi_n(\q)$, distinguishes with probability at least $2/3$ between (i)~$\totalvardist{\p}{\q} \leq \dst$ and (ii)~$\totalvardist{\p}{\q} > \dst+\delta$.
\end{theorem}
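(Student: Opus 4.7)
The plan is to observe that the permutation promise only restricts the set of possible input distributions, so any tolerant identity tester that works in the general (unrestricted) setting automatically works in our promise setting, without modification. Thus the statement reduces to a direct invocation of the existing tolerant identity testing upper bound.

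More concretely, I would first recall the result of Valiant and Valiant~\cite[Theorem~4]{ValiantV11}: there is an algorithm that, given sample access to an arbitrary unknown distribution $\p$ over $[n]$ and an explicit reference distribution $\q$ over $[n]$, together with parameters $0\le \dst,\delta\le 1$ with $\delta = \bigOmega{1/\sqrt{\log n}}$, takes $m = \bigO{n/(\delta^2 \log n)}$ samples from $\p$ and distinguishes with probability at least $2/3$ between $\totalvardist{\p}{\q}\le \dst$ and $\totalvardist{\p}{\q} > \dst + \delta$. (Alternatively, one can invoke the estimator of~\cite{JiaoHW18}, which yields the same sample complexity.)

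Now I would simply run this algorithm on our input, ignoring the promise $\p\in\Pi_n(\q)$ entirely. Since $\Pi_n(\q)\subseteq \distribs{[n]}$ is a subset of all distributions over $[n]$, the guarantees of the Valiant--Valiant tester apply verbatim: on the \yes{} instances ($\totalvardist{\p}{\q}\le \dst$) it outputs \yes{} with probability at least $2/3$, and on the \no{} instances ($\totalvardist{\p}{\q} > \dst + \delta$) it outputs \no{} with probability at least $2/3$. The sample complexity is exactly $\bigO{n/(\delta^2 \log n)}$, as claimed.

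There is no real obstacle here: the whole content of the theorem is that the upper bound for tolerant identity testing in the general setting remains valid when one additionally promises $\p\in \Pi_n(\q)$. The interesting direction for tolerant testing under this promise is the lower bound, which is treated separately in the subsequent sections.
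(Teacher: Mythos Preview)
Your proposal is correct and matches the paper's approach exactly: the paper also simply invokes the Valiant--Valiant tolerant identity testing upper bound (with a reference to~\cite{JiaoHW18} as an alternative), noting that the promise $\p\in\Pi_n(\q)$ can be ignored since any general-purpose estimator applies to this restricted setting.
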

We note that the requirement $\delta = \bigOmega{1/\sqrt{\log n}}$ has been relaxed in~\cite{JiaoHW18}.

\subsection{Lower bound}
In this section, we prove the theorem below, our lower bound on the sample complexity of tolerant testing under promise of permutation. Before doing so, we emphasize that the known $\bigOmega{\frac{n}{\delta^2\log n}}$ sample complexity lower bound for tolerant testing \emph{absent} this promise does not apply to our setting, as the promise of permutation makes the testing problem easier. In particular, the hard instances used to prove the aforementioned $\bigOmega{\frac{n}{\delta^2\log n}}$ lower bound do not satisfy this promise.\footnote{One can also note that the lower bound for ``standard'' tolerant testing is obtained by choosing the reference distribution to be uniform over $[n]$. Under promise of permutation, this particular instance of the problem is trivial, as any permutation of the uniform distribution is still the uniform distribution.}
\begin{theorem}
  \label{theo:toltesting:lb:1}
  Any algorithm which, given a reference distribution $\q$ over $[n]$, $0< \dst, \delta \leq 1$, and sample access to an unknown distribution $\p\in \Pi_n(\q)$, distinguishes with probability at least $2/3$ between (i)~$\totalvardist{\p}{\q} \leq \dst$ and (ii)~$\totalvardist{\p}{\q} > \dst+\delta$, must have sample complexity $\bigOmega{\delta^2 n^{1-O(1/\log(1/\delta))}}$.
\end{theorem}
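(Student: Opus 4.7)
The plan is to establish the lower bound via Le~Cam's two-point method (equivalently Yao's minimax principle): I will construct two priors $\mu_{\rm Y}, \mu_{\rm N}$ supported on $\Pi_n(\q)$ such that $\p\sim\mu_{\rm Y}$ is $\dst$-close and $\p\sim\mu_{\rm N}$ is $(\dst+\delta)$-far from $\q$ (each with high probability), yet the mixtures of $m$-sample distributions are $o(1)$-close in total variation for any $m$ below the claimed threshold.

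First I would fix a reference distribution $\q$ with a multi-level (piecewise uniform) structure: the domain $[n]$ is partitioned into $k = \Theta(\log(1/\delta))$ buckets $B_1,\dots,B_k$ of prescribed sizes, and $\q$ takes the constant value $q_\ell$ on $B_\ell$. The sizes $|B_\ell|$ and values $q_\ell$ will be calibrated so that (i)~each bucket is large enough that within-bucket permutations leave the sample distribution unchanged, and (ii)~the moment-matching construction below is feasible. The two priors are generated by random permutations defined as follows: independently for each element $i$, sample a ``target level'' $\tau(i)\in[k]$ from a distribution $\nu_{\rm Y}$ (resp.\ $\nu_{\rm N}$), and then complete $\pi$ to a uniform bijection consistent with the sampled level assignments.

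The heart of the construction is a moment-matching argument, in the spirit of Valiant--Valiant and Wu--Yang: I want $\nu_{\rm Y}$ and $\nu_{\rm N}$ to agree on all moments up to degree $k$, so that for any tuple of $t\leq k$ distinct elements the joint distribution of $(\tau(i_1),\dots,\tau(i_t))$ is identical under both priors. At the same time, the ``displacement functional'' $F(\nu) = \sum_\ell |B_\ell|\,\mathbb{E}_{\tau\sim\nu}[\,|q_\ell - q_\tau|\,]$, which captures $\expect{\totalvardist{\q\circ\pi}{\q}}$, must differ between $\nu_{\rm Y}$ and $\nu_{\rm N}$ by at least $\delta$. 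Such pairs are obtained from the standard Chebyshev-extremal polynomial construction on the finite set of bucket values, where the tradeoff between matching order and induced gap forces the matching degree to be $k=\Theta(\log(1/\delta))$ for a gap of size $\delta$.

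The main obstacle is to bound the $\chi^2$-divergence between the Poissonized $m$-sample mixture distributions under $\mu_{\rm Y}$ and $\mu_{\rm N}$. After Poissonization, per-element sample counts factor as $\bigotimes_{i\in[n]}\poisson{m\p(i)}$, and standard manipulations express $1+\chi^2$ as $\mathbb{E}_{\pi,\pi'}\exp\left(m\sum_i [\q(\pi(i))\q(\pi'(i))/\q(i) - \q(i)]\right)$, which can be Taylor-expanded as a sum indexed by multisets of element indices. Each $t$-tuple of repeated indices contributes a term of order $(m/n)^t$ times an expectation over $\pi,\pi'$ of a $t$-fold product depending only on the level assignments; the moment-matching property guarantees cancellation of all $t\leq k$ contributions between $\mu_{\rm Y}$ and $\mu_{\rm N}$. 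The surviving $t\geq k+1$ terms are bounded crudely using the bounded ratios between the $q_\ell$'s, yielding a total $\chi^2$ of order $\delta^2 \cdot (m/n^{1-O(1/k)})^{O(k)}$, which is $o(1)$ precisely when $m=o(\delta^2 n^{1-O(1/\log(1/\delta))})$; the $\delta^2$ prefactor comes from squaring the order-$k$ gap between $\nu_{\rm Y}$ and $\nu_{\rm N}$. The remaining pieces---concentration of $\totalvardist{\q\circ\pi}{\q}$ around its expectation under each prior, and the standard Poissonization-to-multinomial reduction---are routine.
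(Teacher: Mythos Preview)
Your proposal has the right high-level shape (two priors on permutations, moment matching, Le~Cam), but there is a genuine gap in the construction that the paper resolves by a rather different mechanism.

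The central issue is the permutation promise. You write ``independently for each element $i$, sample a target level $\tau(i)\in[k]$ from $\nu_{\rm Y}$ (resp.\ $\nu_{\rm N}$), and then complete $\pi$ to a uniform bijection consistent with the sampled level assignments.'' But independent sampling of $\tau(i)$ will not produce level counts that match the bucket sizes $|B_\ell|$, so there is in general no bijection to complete to; conditioning on the correct counts (or sampling a random matching instead) destroys the independence you rely on downstream. In particular, your claim that ``for any tuple of $t\le k$ distinct elements the joint distribution of $(\tau(i_1),\dots,\tau(i_t))$ is identical under both priors'' does not follow from moment matching of $\nu_{\rm Y},\nu_{\rm N}$ even under independent sampling (that would require $\nu_{\rm Y}=\nu_{\rm N}$), and certainly not once the bijection constraint is imposed. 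The $\chi^2$ expansion you sketch is the standard Ingster product formula, which needs per-coordinate independence of the hidden parameters; permutation randomness is globally correlated, and the cancellation you assert for $t\le k$ does not go through as stated.

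The paper sidesteps both difficulties with a \emph{block} construction rather than a global one. It takes the Valiant--Valiant pair $\p_{\rm VV},\q_{\rm VV}$ over $[k]$ (with $k=\Theta(1/\delta)$) matching the first $r=\Theta(\log(1/\delta))$ moments, and from them builds explicit distributions $\closedist,\fardist$ over $[2k^2]$ that are each a permutation of a fixed $\refdist$, with $\totalvardist{\fardist}{\refdist}-\totalvardist{\closedist}{\refdist}\ge \dst_0/k$. The reference over $[n]$ is then $m=n/(2k^2)$ concatenated copies of $\refdist$, and the two priors are obtained by concatenating $m$ independently bucket-permuted copies of $\closedist$ (resp.\ $\fardist$). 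This guarantees $\p\in\Pi_n(\q)$ by construction, and indistinguishability is a one-line generalized birthday bound: unless some block receives $r{+}1$ samples, the moment matching of $\p_{\rm VV},\q_{\rm VV}$ makes the two priors perfectly indistinguishable on that block, and avoiding $(r{+}1)$-collisions across $m$ equal-mass blocks requires $\Omega(m^{1-1/(r+1)})=\Omega(\delta^2 n^{1-O(1/\log(1/\delta))})$ samples. No Poissonization or $\chi^2$ computation is needed.
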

\begin{proof}
In what follows, we assume that $\delta = \bigOmega{1/\sqrt{n}}$, as otherwise there is nothing to prove. Let $k\geq 1$ be an integer to be chosen during the course of the analysis (we will set $k=\Theta(1/\delta)$), and write $n=2mk^2$ for some integer $m\geq 1$ (this can be done without loss of generality, as our assumption on $\delta$ ensures that $n\geq 2mk^2$). For $1\leq \ell\leq 2k$, we define the integer interval $I_{k,\ell}\eqdef[k]+(\ell-1)k$, so that $[2k^2] = \bigcup_{\ell=1}^{2k}I_{k,\ell}$.

Given two distributions $\p,\q$ over $[k]$, we define families of distributions $\closefam_{\p,\q}$ and $\farfam_{\p,\q}$ over $[n]$ as follows: first, we consider the distributions $\closedist,\fardist$, each over $[2k^2]$, obtained by ``repeating and alternating'' $\p$ and $\q$ as follows:
\begin{itemize}
	\item For $1\leq \ell\leq k$ and $j \in \bucket{\bucketsz}{\ell}$, $\closedist(j) = \frac{1}{2k}\p(j)$.
	\item For $1\leq \ell\leq k$ and $j \in \bucket{\bucketsz}{k+\ell}$, $\closedist(j) = \frac{1}{2k}\q(j)$.
\end{itemize}

\begin{figure}[ht]\centering
\begin{tikzpicture}[scale=\scalefigure]
\begin{axis}[
    xmin=0, xmax=50, ymin=0, 
    ytick={0},
    width=\textwidth,
    height=\axisdefaultheight,
    area style,
    ]
\node[anchor=north west] at (rel axis cs:0,1) {Distribution $\closedist$};
\foreach \r in {0,...,4}{
  \pgfmathtruncatemacro{\i}{10*\r+10}
  \edef\temp{\noexpand%
    \addplot+[ybar interval,mark=no,color=blue!\i,draw=black] plot coordinates { (5*\r, 3/16) (5*\r+1, 2/16) (5*\r+2, 6/16) (5*\r+3, 4/16) (5*\r+4, 1/16) (5*\r+5, 0) };
  }\temp
}
\foreach \r in {5,...,9}{
  \pgfmathtruncatemacro{\i}{10*(\r-5+1)+10}
  \edef\temp{\noexpand%
  \addplot+[ybar interval,mark=no,color=red!\i,draw=black] plot coordinates { (5*\r, 4/36) (5*\r+1, 10/36) (5*\r+2, 8/36) (5*\r+3, 8/36) (5*\r+4, 6/36) (5*\r+5, 0) };
  }\temp
}
\end{axis}
\end{tikzpicture}
\\
\begin{tikzpicture}[scale=\scalefigure]
\begin{axis}[
    xmin=0, xmax=50, ymin=0, 
    ytick={0},
    width=\textwidth,
    height=\axisdefaultheight,
    area style,
    ]
\node[anchor=north west] at (rel axis cs:0,1) {Distribution $\fardist$};
\foreach \r in {5,...,9}{
  \pgfmathtruncatemacro{\i}{10*(\r-5+1)+10}
  \edef\temp{\noexpand%
    \addplot+[ybar interval,mark=no,color=blue!\i,draw=black] plot coordinates { (5*\r, 3/16) (5*\r+1, 2/16) (5*\r+2, 6/16) (5*\r+3, 4/16) (5*\r+4, 1/16) (5*\r+5, 0) };
  }\temp
}
\foreach \r in {0,...,4}{
  \pgfmathtruncatemacro{\i}{10*\r+10}
  \edef\temp{\noexpand%
  \addplot+[ybar interval,mark=no,color=red!\i,draw=black] plot coordinates { (5*\r, 4/36) (5*\r+1, 10/36) (5*\r+2, 8/36) (5*\r+3, 8/36) (5*\r+4, 6/36) (5*\r+5, 0) };
  }\temp
}
\end{axis}
\end{tikzpicture}
\\
\begin{tikzpicture}[scale=\scalefigure]
\begin{axis}[
    xmin=0, xmax=50, ymin=0, 
    ytick={0},
    width=\textwidth,
    height=\axisdefaultheight,
    area style,
    ]
\node[anchor=north west] at (rel axis cs:0,1) {Distribution $\refdist$};
\foreach \r in {1,...,5}{
  \pgfmathtruncatemacro{\i}{10*\r+10}
  \edef\temp{\noexpand%
    \addplot+[ybar interval,mark=no,color=blue!\i,draw=black] plot coordinates { (\r-1, 3/16) (\r, 0) };
  }\temp
  \edef\temp{\noexpand%
    \addplot+[ybar interval,mark=no,color=blue!\i,draw=black] plot coordinates { (5+\r-1, 2/16) (5+\r, 0) };
  }\temp
  \edef\temp{\noexpand%
    \addplot+[ybar interval,mark=no,color=blue!\i,draw=black] plot coordinates { (10+\r-1, 6/16) (10+\r, 0) };
  }\temp
  \edef\temp{\noexpand%
    \addplot+[ybar interval,mark=no,color=blue!\i,draw=black] plot coordinates { (15+\r-1, 4/16) (15+\r, 0) };
  }\temp
  \edef\temp{\noexpand%
    \addplot+[ybar interval,mark=no,color=blue!\i,draw=black] plot coordinates { (20+\r-1, 1/16) (20+\r, 0) };
  }\temp
}

\foreach \r in {1,...,5}{
  \pgfmathtruncatemacro{\i}{10*\r+10}
  \edef\temp{\noexpand%
    \addplot+[ybar interval,mark=no,color=red!\i,draw=black] plot coordinates { (25+\r-1, 4/36) (25+\r, 0) };
  }\temp
  \edef\temp{\noexpand%
    \addplot+[ybar interval,mark=no,color=red!\i,draw=black] plot coordinates { (25+5+\r-1, 10/36) (25+5+\r, 0) };
  }\temp
  \edef\temp{\noexpand%
    \addplot+[ybar interval,mark=no,color=red!\i,draw=black] plot coordinates { (25+10+\r-1, 8/36) (25+10+\r, 0) };
  }\temp
  \edef\temp{\noexpand%
    \addplot+[ybar interval,mark=no,color=red!\i,draw=black] plot coordinates { (25+15+\r-1, 8/36) (25+15+\r, 0) };
  }\temp
  \edef\temp{\noexpand%
    \addplot+[ybar interval,mark=no,color=red!\i,draw=black] plot coordinates { (25+20+\r-1, 6/36) (25+20+\r, 0) };
  }\temp
}
\end{axis}
\end{tikzpicture}
\caption{An example of $\closedist$ (top), $\fardist$ (middle), and $\refdist$ (bottom) over $[2k^2]$, for $k=5$; here, we took $\p=\frac{1}{16}(3,2,6,4,1)$ and $\q=\frac{1}{18}(2,5,4,4,3)$.}
\end{figure}

\noindent We obtain $\fardist$ over $[2k^2]$ in a similar fashion, but swapping $\bucket{\bucketsz}{\ell}$ and $\bucket{\bucketsz}{k+\ell}$:
\begin{itemize}
	\item For $1\leq \ell\leq k$ and $j \in \bucket{\bucketsz}{\ell}$, $\fardist(j) = \frac{1}{2k}\q(j)$.
	\item For $1\leq \ell\leq k$ and $j \in \bucket{\bucketsz}{k+\ell}$, $\fardist(j) = \frac{1}{2k}\p(j)$.
\end{itemize}

\noindent Further, we define our ``reference'' distribution $\refdist$ over $[2k^2]$ as
\begin{itemize}
	\item For $1 \leq \ell \leq \bucketsz$ and $j \in \bucket{\bucketsz}{\ell}$, $\refdist(j) = \frac{1}{2k}\p(\ell)$.
	\item For $\bucketsz+1 \leq \ell \leq 2\bucketsz$ and $j \in \bucket{\bucketsz}{\ell}$, $\refdist(j) = \frac{1}{2k}\q(\ell)$.	
\end{itemize}
We also define the reference distribution $\refdist^\ast_{\p,\q}$ over $[n]=[2k^2m]$ by concatenating $m$ copies of $\refdist$ and normalizing the result; that is,
\[
    \refdist^\ast_{\p,\q} \eqdef \frac{1}{2m}(\refdist \sqcup \refdist \sqcup \dots \sqcup \refdist),
\]
where $\sqcup$ denotes the vector concatenation.
Note that both $\closedist$ and $\fardist$ are permutations of $\refdist$, and that $\normone{\refdist}=\normone{\closedist}=\normone{\fardist}=1$. 
Next, we bound the gap between $\totalvardist{\fardist}{\refdist}$ and $\totalvardist{\closedist}{\refdist}$, relating it to the distance between $\p$ and $\q$.
\begin{claim}
  \label{claim:distance:gap:c:f:r}
$\totalvardist{\fardist}{\refdist} \geq \totalvardist{\closedist}{\refdist} + \frac{1}{k}\totalvardist{\p}{\q}$
\end{claim}
\begin{proof}
	We will analyze the contributions to $\totalvardist{\closedist}{\refdist}$ and $\totalvardist{\closedist}{\fardist}$ on $\bucket{k}{\ell}$ and $\bucket{k}{k+\ell}$ for $1 \leq \ell \leq k$. Without loss of generality, we can assume that $\p,\q$ are non-decreasing. Then, from our definition of $\closedist$, $\refdist$, and $\fardist$, we have
\begin{align*}
\totalvardist{\fardist}{\refdist} 
&= \frac{1}{4k} \sum_{i=1}^k \sum_{j=1}^k (|\p(i) - \q(j)| + |\q(i) - \p(j)|) 
= \frac{1}{2k} \mleft( \sum_{i=1}^k \sum_{j=1}^k |\p(i) - \q(j)|  \mright)\\
&= \frac{1}{2k} \mleft( \sum_{i=1}^k |\p(i)-\q(i)| + \sum_{i=1}^k\sum_{j=1}^{i-1} (|\p(i)-\q(j)| + |\p(j)-\q(i)|) \mright)\,\\
\totalvardist{\closedist}{\refdist} 
&= \frac{1}{4k} \sum_{i=1}^k \sum_{j=1}^k (|\p(i) - \p(j)| + |\q(i) - \q(j)|)
= \frac{1}{2k} \sum_{i=1}^k\sum_{j=1}^{i-1} ((\p(i)-\p(j))+(\q(i)-\q(j)))
\end{align*}
where for the last equality we used the assumption that $\p,\q$ were non-decreasing to write
\[
\sum_{i=1}^k \sum_{j=1}^k |\p(i) - \p(j)| 
= \sum_{i=1}^k \sum_{j=1}^{i-1} (\p(i)-\p(j)) + \sum_{i=1}^k \sum_{j=i+1}^{k} (\p(j)-\p(i)) = 2\sum_{i=1}^k \sum_{j=1}^{i-1} (\p(i)-\p(j))\,,
\]

The conclusion then follows from recalling that $\totalvardist{\p}{\q} = \frac{1}{2}\sum_{i=1}^k |\p(i)-\q(i)|$, and observing that
$(\p(i)-\p(j))+(\q(i)-\q(j)) = (\p(i)-\q(j))+(\q(i)-\p(j)) \leq |\p(i)-\q(j)| + |\p(j)-\q(i)|$.\qedhere

\end{proof}
To define $\closefam_{\p,\q}$ and $\farfam_{\p,\q}$, we will need one further piece of notation. We denote by $\mathcal{B}_{k}\subseteq\mathcal{S}_{2k^2}$ the set of all permutations of $[2k^2]$ ``respecting the buckets,'' that is,
\[
    \mathcal{B}_{k} \eqdef \setOfSuchThat{ \pi \in \mathcal{S}_{2k^2}}{ \pi(I_{k,\ell})=I_{k,\ell} \forall \ell \in [2k] }
\]
We then let
\[
    \closefam_{\p,\q} = \setOfSuchThat{  \frac{1}{2mk}(\closedist\circ\pi_1 \sqcup \closedist\circ\pi_2 \sqcup \dots \sqcup \closedist\circ\pi_m) }{ \pi_1,\dots, \pi_m \in \mathcal{B}_{k} }
\]
and 
\[
    \farfam_{\p,\q} = \setOfSuchThat{  \frac{1}{2mk}(\fardist\circ\pi_1 \sqcup \fardist\circ\pi_2 \sqcup \dots \sqcup \fardist\circ\pi_m) }{ \pi_1,\dots, \pi_m \in \mathcal{B}_{k} }
\]
where as before $\sqcup$ denotes the vector concatenation; that is, we stitch together $m$ blocks, each consisting on a permuted version of either $\closedist$ or $\fardist$. Note that since $n=m\cdot 2k^2$ and each $\closedist$ (resp. $\fardist$) is a $(2k^2)$-dimensional vector, $\closefam_{\p,\q}$ and $\farfam_{\p,\q}$ are indeed families of probability distributions over $[n]$, and 
$\closefam_{\p,\q},\farfam_{\p,\q} \subseteq \Pi_{n}(\refdist^\ast_{\p,\q})$.\medskip

The construction above allows us to convert any two distributions $\p,\q$ with sufficiently many matching moments to families of distributions (whose elements are all permutations of a single reference one) hard to distinguish:
\begin{claim}
  \label{claim:lower:bound:from:moment:matching}
  There exists some absolute constant $c>0$ such that, if $\p,\q$ have matching first $r$-way moments, it is impossible to distinguish a uniformly random element of $\closefam_{\p,\q}$ from a uniformly random element of $\farfam_{\p,\q}$ given fewer than $c m^{1-\frac{1}{r+1}}$ samples.
\end{claim}
\begin{proof}
By assumption on $\p,\q$ and out construction of $\closedist,\fardist$ from them, for every of the $m$ contiguous blocks of $2k^2$ elements, the $r$-way moments of the corresponding conditional distributions exactly match. Given that a uniformly element drawn of $\p'$ from $\closefam_{\p,\q}$ and $\q'$ from  $\farfam_{\p,\q}$ corresponds to independent permutations inside each block, any block in which fewer than $r+1$ samples falls brings exactly zero information about whether it comes from $\p'$ or $\q'$ (specifically, one could simulate the distribution of those $s < r+1$  samples without getting any sample from the real distribution). Since each of these $m$ blocks has total probability $1/m$ under both $\p'$ and $\q'$, by a generalized birthday paradox (see, e.g.,~\cite{SuzukiTKT06}), with probability at least $9/10$ no block will receive more than $r$ samples unless the total number of samples is at least $c m^{1-\frac{1}{r+1}}$, for some absolute constant $c>0$.
\end{proof}

It remains to specify \emph{which} pair of distributions with ``sufficiently many matching moments'' we will use. While we could argue directly about the existence of such a pair of distributions with desirable properties, it is simpler to leverage a construction due to Valiant and Valiant~\cite{ValiantV11}, which exhibits the desired properties.
\begin{claim}
  \label{claim:moment:matching}
  There exists some $\dst_0>0$ such that the following holds. For every sufficiently large $r$, there exists a pair of distributions (without loss of generality, non-decreasing) $\p_{\rm VV},\q_{\rm VV}$ over $k=O(r2^r)$ elements with matching first $r$-way moments, but $\totalvardist{\p_{\rm VV}}{\q_{\rm VV}} \geq \dst_0$. %
\end{claim}
\begin{proof}
This follows from the lower bound construction of~\cite{ValiantV11}.
\end{proof}
We will rely on this pair of distributions $\p_{\rm VV},\q_{\rm VV}$, and hereafter write $\closefam,\farfam,$ and $\refdist^\ast$ for $\closefam_{\p_{\rm VV},\q_{\rm VV}},\farfam_{\p_{\rm VV},\q_{\rm VV}},$ and $\refdist^\ast_{\p_{\rm VV},\q_{\rm VV}}$, respectively.
\begin{claim}
For every $\p'\in\closefam$ and $\q'\in\farfam$, we have $\totalvardist{\q'}{\refdist^\ast} > \totalvardist{\p'}{\refdist^\ast} + \frac{\dst_0}{k}$.
\end{claim}
\begin{proof}
Due to the definition of $\closefam$, $\farfam$, and $\refdist^\ast$ as $m$-fold concatenations, and since $\refdist$ is invariant by permutations from $\mathcal{B}_{k}$, it is sufficient to prove the claim for $\p_{\rm VV}$, $\q_{\rm VV}$, and $\refdist$ (over $[2k^2]$). The claimed bound then immediately follows from~\cref{claim:distance:gap:c:f:r}.
\end{proof}
To finish the argument, it only remains to combine the various claims. We choose $k\geq \frac{\dst_0}{\delta}$ and $m = n/(2k^2) \geq 1$ (since $\delta = \Omega(1/\sqrt{n})$. By~\cref{claim:moment:matching}, we can then set $r\eqdef \bigOmega{\log k}$ and obtain, from~\cref{claim:lower:bound:from:moment:matching}, a sample complexity lower bound of
\[
    \bigOmega{ m^{1-\frac{1}{r+1}} } = \bigOmega{\delta^2 n^{1-\bigO{\frac{1}{\log(1/\delta)}}}}
\] 
as desired.
\end{proof}

\ifnum\withproofs=1
The theorem immediately implies the following two corollaries.
\begin{corollary}
  For every $c>0$, there exists some $\delta > 0$ such that the following holds. 
  Any algorithm which, given a reference distribution $\q$ over $[n]$, $\dst\in(0,1)$, and sample access to an unknown distribution $\p\in \Pi_n(\q)$, distinguishes with probability at least $2/3$ between (i)~$\totalvardist{\p}{\q} \leq \dst$ and (ii)~$\totalvardist{\p}{\q} > \dst+\delta$, must have sample complexity $\bigOmega{ n^{1-c} }$.
\end{corollary}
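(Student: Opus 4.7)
The plan is to derive this as a direct consequence of \cref{theo:toltesting:lb:1} by a careful choice of $\delta$ as a function of $c$. The theorem gives a lower bound of $\bigOmega{\delta^2 n^{1 - O(1/\log(1/\delta))}}$, where the hidden constant inside the $O(\cdot)$ in the exponent is some absolute constant $K$ coming from the construction (specifically, from the relation between $r$ and $k = O(r 2^r)$ in \cref{claim:moment:matching}, giving $1/(r+1) = O(1/\log k) = O(1/\log(1/\delta))$ once $k = \Theta(1/\delta)$).

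First, I would fix $c > 0$ and set $\delta \eqdef e^{-K/c}$ (or any smaller strictly positive value), where $K$ is the absolute constant hidden in the exponent of \cref{theo:toltesting:lb:1}. With this choice, $\log(1/\delta) \geq K/c$, so the exponent satisfies $1 - O(1/\log(1/\delta)) \geq 1 - c$. Moreover, $\delta$ depends only on $c$, so $\delta^2 = \Theta_c(1)$ is a positive constant, which gets absorbed in the $\bigOmega{\cdot}$ notation.

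Second, I would verify that the regime of validity of \cref{theo:toltesting:lb:1} is satisfied: the theorem's proof assumed $\delta = \bigOmega{1/\sqrt{n}}$, which is trivially the case here for all $n$ large enough, since $\delta$ is a fixed constant. For small $n$, the statement $\bigOmega{n^{1-c}}$ is vacuous up to absorbing a constant into the $\bigOmega{\cdot}$.

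Combining these observations, \cref{theo:toltesting:lb:1} instantiated with this constant $\delta$ immediately yields the sample complexity lower bound $\bigOmega{n^{1-c}}$, as claimed. There is no real obstacle here; the only thing to be slightly careful about is keeping track of the fact that the constant hidden in the exponent's $O(\cdot)$ is absolute (independent of $\delta$), so that one can solve for $\delta$ as a function of $c$ alone.
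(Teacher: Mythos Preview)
Your proposal is correct and is exactly the intended derivation: the paper itself gives no explicit proof, stating only that ``the theorem immediately implies'' this corollary, and your argument (choose $\delta$ a small enough constant depending on $c$ so that $K/\log(1/\delta)\leq c$, then absorb $\delta^2$ into the $\bigOmega{\cdot}$) is precisely how one unpacks that implication.
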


\begin{corollary}
Any algorithm which, given a reference distribution $\q$ over $[n]$, $\dst\in(0,1)$, and sample access to an unknown distribution $\p\in \Pi_n(\q)$, distinguishes with probability at least $2/3$ between (i)~$\totalvardist{\p}{\q} \leq \dst$ and (ii)~$\totalvardist{\p}{\q} > \dst+1/2^{\sqrt{\log n}}$, must have sample complexity $\frac{n}{2^{O(\sqrt{\log n})}}$.
\end{corollary}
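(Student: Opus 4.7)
The plan is to obtain this corollary by a direct instantiation of \cref{theo:toltesting:lb:1}, choosing the gap parameter carefully so that the claimed subpolynomial loss emerges. Specifically, I will set $\delta \eqdef 1/2^{\sqrt{\log n}}$, which makes $\log(1/\delta) = \sqrt{\log n}$, and then simplify the generic lower bound $\bigOmega{\delta^2 n^{1 - O(1/\log(1/\delta))}}$ supplied by the theorem.

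Before substituting, I would verify that the implicit hypothesis $\delta = \bigOmega{1/\sqrt{n}}$ from the proof of \cref{theo:toltesting:lb:1} (needed so that $m \geq 1$) is satisfied with this choice. Since $\sqrt{\log n} \leq (\log n)/2$ whenever $\log n \geq 4$, we indeed have $\delta = 2^{-\sqrt{\log n}} \geq 2^{-(\log n)/2} = 1/\sqrt{n}$, so the theorem applies for all sufficiently large $n$ (for small $n$, the corollary is vacuous after adjusting the hidden constants).

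The remaining step is purely arithmetic. The exponent in the moment-matching factor becomes $1 - O(1/\sqrt{\log n})$, so
\[
    n^{1 - O(1/\log(1/\delta))} = n \cdot 2^{-O((\log n)/\sqrt{\log n})} = n \cdot 2^{-O(\sqrt{\log n})}\,,
\]
and the prefactor $\delta^2 = 2^{-2\sqrt{\log n}}$ gets absorbed into the same $2^{-O(\sqrt{\log n})}$ slack. Multiplying yields
\[
    \bigOmega{\delta^2\, n^{1-O(1/\log(1/\delta))}} = \bigOmega{n/2^{O(\sqrt{\log n})}}\,,
\]
which is the claimed bound.

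I do not expect any real obstacle here: all the nontrivial ingredients, namely the alternating-block construction of $\closedist$ and $\fardist$, the moment-matching pair of \cref{claim:moment:matching}, and the generalized birthday-paradox argument underlying \cref{claim:lower:bound:from:moment:matching}, have already been incorporated into \cref{theo:toltesting:lb:1}. The corollary simply selects the value of $\delta$ that balances $\delta^2$ against the factor $n^{-O(1/\log(1/\delta))}$ so that both contribute $2^{-O(\sqrt{\log n})}$, producing the advertised quasi-linear lower bound.
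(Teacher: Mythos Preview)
Your proposal is correct and matches the paper's approach: the paper simply states that the corollary ``immediately'' follows from \cref{theo:toltesting:lb:1}, and your instantiation $\delta = 2^{-\sqrt{\log n}}$ together with the arithmetic simplification is exactly the intended derivation.
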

\fi

\ifnum\backwardsbuckets=0
\paragraph{Tolerant testing $C$-approximation}
\newcommand{\twocmone}{m}
\renewcommand{\bucket}{B}
\newcommand{\apx}{C}
\newcommand{\refsum}{s}
We now turn to our second tolerant testing lower bound, which applies to algorithms providing a $C$-factor approximation of the distance to the reference distribution.
\begin{theorem}
\label{thm:tol-mult-main}
	Any algorithm which, given a reference distribution $\q$ over $[n]$, $0< \dst \leq 1$, $\apx \geq 2$, and sample access to an unknown distribution $\p\in \Pi_n(\q)$, distinguishes with probability at least $2/3$ between (i)~$\totalvardist{\p}{\q} \leq \dst$ and (ii)~$\totalvardist{\p}{\q} > \apx \dst$, must have sample complexity $\bigOmega{\sqrt{\frac{n}{8^\apx}}/\dst}$.
\end{theorem}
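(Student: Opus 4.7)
The plan is to mirror the block-based, birthday-paradox argument from the proof of Theorem~\ref{theo:toltesting:lb:1}, replacing the Valiant--Valiant moment-matching gadget (Claim~\ref{claim:moment:matching}) with one that produces a \emph{multiplicative} rather than additive gap between the reshuffling distances. Specifically, I would construct a pair $\p_C, \q_C$ over a small alphabet $[k]$ with $k = \bigTheta{2^C}$, and a reference $\refdist$ on $[2k^2]$ via the same interleaving recipe used in Theorem~\ref{theo:toltesting:lb:1}, so that the associated $\closedist, \fardist \in \Pi_{2k^2}(\refdist)$ satisfy $\totalvardist{\closedist}{\refdist}\leq \dst$ while $\totalvardist{\fardist}{\refdist}>C\dst$ after a suitable rescaling. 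The lower bound then follows by concatenating $m = \bigTheta{n/k^2} = \bigTheta{n/4^C}$ such blocks to form $\closefam, \farfam \subseteq \Pi_n(\refdist^\ast)$ and invoking a generalized birthday paradox: with $o(\sqrt{m}/\dst)$ samples, no block receives more than one ``signal'' sample, and conditioned on this collision-free event the samples convey no information distinguishing $\closefam$ from $\farfam$, since the bucket marginals coincide under both families.

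The main design choice is the gadget itself. I would take $\q_C$ nearly uniform on $[k]$ and $\p_C$ concentrated on a $2^{-\bigTheta{C}}$-fraction of $[k]$. In the interleaved construction $\closedist$, each bucket carries a rescaled copy of $\p_C$ or $\q_C$, and because $\refdist$ is uniform inside each bucket with matching total mass, the contribution to $\totalvardist{\closedist}{\refdist}$ stays small. In $\fardist$, by contrast, a bucket with $\refdist$-total proportional to $\p_C(\ell)$ is filled with $\q_C$-shaped mass (and vice versa), and since $\p_C, \q_C$ have very different magnitudes across buckets, the $\lp[1]$ discrepancy gets amplified by a factor $\bigTheta{C}$ relative to the interleaved case. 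The support size $k = \bigTheta{2^C}$ is precisely what is needed to squeeze a $C$-fold multiplicative gap out of the reshuffling while keeping $\closedist, \fardist \in \Pi_{2k^2}(\refdist)$. To meet the distance parameter $\dst$ in the theorem statement, I would then mix $\closedist$ and $\fardist$ with a permutation-invariant background (e.g., the reference $\refdist$ itself, or a uniform distribution on the support) with weight $1-\bigTheta{\dst}$; this preserves the permutation promise, scales both distances by $\bigTheta{\dst}$, and introduces the $1/\dst$ factor in the final bound, since a sample lands in the ``signal'' region only with probability $\bigTheta{\dst}$.

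The hard part will be the explicit construction and analysis of $\p_C, \q_C$ delivering the exact multiplicative ratio with support $\bigTheta{2^C}$. A natural candidate is a geometric cascade of $\bigTheta{C}$ levels, where level $i$ carries mass proportional to $2^{-i}$ on a set of size $2^i$, so that $\q_C$ places equal weight across all levels while $\p_C$ concentrates on the heaviest few; summing the per-level contributions in the interleaved versus swapped constructions should yield the claimed $C$-fold amplification. Once the distance analysis is in place, the birthday-paradox step is standard (following the same template as Claim~\ref{claim:lower:bound:from:moment:matching} with the inequality of~\cite{SuzukiTKT06} specialized to $r=1$), and it delivers a sample-complexity lower bound of $\bigOmega{\sqrt{m}/\dst}=\bigOmega{\sqrt{n/4^C}/\dst}$; the $8^C$ in the theorem statement absorbs the factor $2$ from $m = n/(2k^2)$ together with the additional constants hidden in the gadget's distance ratio.
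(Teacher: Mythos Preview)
Your high-level framework is right and matches the paper: build a gadget $(\refdist,\closedist,\fardist)$ on a block of size $2^{\Theta(C)}$ with $\closedist,\fardist\in\Pi(\refdist)$, equal bucket marginals, and a factor-$C$ gap between $\totalvardist{\closedist}{\refdist}$ and $\totalvardist{\fardist}{\refdist}$; then concatenate $m=\Theta(n/4^C)$ blocks with independent bucket-respecting permutations and invoke a birthday bound. Your mixing-with-uniform idea to introduce the $1/\dst$ factor is also fine (and is in fact glossed over in the paper).

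The genuine gap is in the gadget. You propose to reuse the interleaving recipe of Theorem~\ref{theo:toltesting:lb:1}, choosing $\p_C,\q_C$ so that the resulting $\closedist,\fardist$ have a \emph{multiplicative}-$C$ gap. This cannot work. For any probability vectors $\p,\q$ on $[k]$ (which is forced: if $\sum_i\p(i)\neq\sum_i\q(i)$ then the bucket marginals of $\closedist$ and $\fardist$ differ and the birthday argument collapses), one has
\[
\sum_{i,j}|\p(i)-\q(j)|\;\le\;\sum_{i,j}|\p(i)-\p(j)|+\sum_{i,j}|\q(i)-\q(j)|,
\]
by the Jensen-type bound $E|X-\mu|\le E|X-X'|$ applied to the empirical value-distributions of $\p$ and $\q$ (both have mean $1/k$). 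Translating back, $\totalvardist{\fardist}{\refdist}\le 2\,\totalvardist{\closedist}{\refdist}$ in the interleaving construction, so the best multiplicative gap it can ever deliver is $2$, regardless of $k$ or of how concentrated $\p_C$ is. Your concrete suggestion ($\p_C$ a near point mass, $\q_C$ uniform) hits this ceiling exactly.

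The paper therefore does \emph{not} go through the interleaving recipe. It builds $\refdist,\closedist,\fardist$ directly on $C{+}1$ geometrically sized buckets: $\refdist$ is constant on each bucket; the close distribution differs from $\refdist$ only inside the two extreme buckets (a single swap between levels $0$ and $C$), while the far distribution shifts mass between every pair of adjacent levels, so that each of the $C{+}1$ buckets contributes $\Theta(1/C)$ to its distance. This is exactly your ``geometric cascade of $\Theta(C)$ levels'' intuition, but implemented as the gadget itself rather than as an input to the interleaving machine; that change is what lets the multiplicative gap grow with $C$.
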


\begin{proof}
	We will prove the theorem via a sequence of lemmas. 
\ifnum\withproofs=0
In the interest of space, their proofs are deferred to the full version of the paper. 
\fi
	 We will assume that $\apx \geq 2$ is an integer, and we define $\twocmone = 2^\apx-1$.  Our proof will proceed similarly to the proof of Theorem~\ref{theo:toltesting:lb:1}. We will begin by working over $[\twocmone(2^{\apx+1}+2^{\apx-1}-3)]$.  Throughout this section, we partition $[\twocmone(2^{\apx+1}+2^{\apx-1}-3)]$ into $\apx+1$ buckets, which we will denote $\bucket_0,\bucket_1,\ldots,\bucket_{C}$, such that each $\bucket_i$ is a set of consecutive integers, $|\bucket_0| = \twocmone2^{\apx-1}$, $|\bucket_\apx| = \twocmone$, and $|\bucket_i| = \twocmone2^{\apx+1-i}$ for $1 \leq i \leq \apx-1$.  We define $\bucket_i = \{b_i,b_i + 1,\ldots,b'_i\}$.

\noindent We define a distribution $\refdist$ in the following way:
\begin{itemize}
	\item For each $j \in \bucket_0$, $\refdist(j) = \frac1{\refsum}$.
	\item For each $1 \leq i \leq \apx-1$ and $j \in \bucket_i$, $\refdist(j) = \frac{2^i}{\refsum}$.
	\item For each $j \in \bucket_\apx$, $\refdist(j) = \frac{2^{\apx}}{\refsum}$.
\end{itemize}

We define two distributions $\p$ and $\q$ such that $\p$ and $\q$ are hard to distinguish with few samples, such that $\totalvardist{\refdist}{\p}$ and $\totalvardist{\refdist}{\q}$ are far apart. 
We define $\q$ in the following way:
\begin{itemize}
	\item For each $j \in \bucket_0$, $\q(j) = \frac2{\refsum}$.
	\item For each $1 \leq i \leq \apx-1$,
	\begin{itemize}
		\item For $j$ in $\{b_i,\ldots,b_i+\twocmone2^{\apx-i}-1\}$, $\q(j) = \frac{2^{i-1}}{\refsum}$.
		\item For $j$ in $\{b_i+\twocmone2^{\apx-i},\ldots,b_i+\twocmone(2^{\apx-i} + 2^{\apx-1-i}) - 1\}$, $\q(j) = \frac{2^i}{\refsum}$.
		\item For $j$ in $\{b_i+\twocmone(2^{\apx-i} + 2^{\apx-1-i}),\ldots,b'_i\}$, $\q(j) = \frac{2^{i+1}}{\refsum}$.
	\end{itemize} and $j \in \bucket_i$, $\q(j) = \frac{2^i}{\refsum}$.
	\item For each $j \in \bucket_\apx$, $\q(j) = \frac{2^{\apx-1}}{\refsum}$.
\end{itemize}

\noindent We define $\p$ as follows:
\begin{itemize}
	\item For each $j \in \bucket_0$,
	\begin{itemize}
		\item If $j \in \{b_0,\ldots,b_0+\twocmone2^{\apx-1}-1\}$, then $\p(j) = \frac1{\refsum}$.
		\item If $j \in \{b_0+\twocmone2^{\apx-1},\ldots,b'_0\}$, then $\p(j) = \frac{2^\apx}{\refsum}$.
	\end{itemize}
	\item For each $1 \leq \apx-1$ and $j \in \bucket_i$, $\p(j) = \refdist(j) = \frac{2^i}{\refsum}$.
	\item For each $j \in \bucket_\apx$,
	\begin{itemize}
		\item If $j \in \{b_\apx,\ldots,b_\apx+2^{\apx-1}-1\}$, then $\p(j) = \frac1{\refsum}$.
		\item If $j \in \{b_\apx+2^{\apx-1},\ldots,b'_\apx\}$, then $\p(j) = \frac{2^\apx}{\refsum}$.
	\end{itemize}
\end{itemize}

\begin{lemma}
\label{lem:tol-mult-equal-buckets}
For $0 \leq i \leq \apx$, $\sum_{j \in \bucket_i} \p(j) = \sum_{j \in \bucket_i} \q(j)$.
\end{lemma}
\ifnum\withproofs=1
\begin{proof}
	The proof is simply direct calculation.	 Observe that in bucket $0$, 
	\[
	\refsum \sum_{j \in \bucket_0} \q(j) = \twocmone2^{\apx-1} \cdot 2 (\twocmone-1)2^{\apx-1} + (\twocmone+1)2^{\apx-1} = 1 \cdot (\twocmone-1)2^{\apx-1} + 2^{\apx} \cdot 2^{\apx-1} = \refsum \sum_{j \in \bucket_0} \p(j).
	\]
	
	In bucket $\apx$, we have\	\[
	\refsum \sum_{j \in \bucket_\apx} \q(j) = \twocmone \cdot 2^{\apx-1} = (\twocmone - 1)2^{\apx-1} + 2^{\apx-1} = (2^{\apx} - 2)2^{\apx-1} + 2^{\apx-1} = 2^{\apx} \cdot (2^{\apx-1} - 1) + 1 \cdot 2^{\apx-1} = \refsum \sum_{j \in \bucket_\apx} \p(j).
	\]
	
	For $1 \leq i \leq \apx-1$, we have 
	\begin{align*}
	\refsum \sum_{j \in \bucket_\apx} \p(j) &= 2^i \cdot \twocmone 2^{\apx+1-i} \\
	&= \twocmone 2^{\apx-1-i} 2^{i+2} \\
	&= \twocmone 2^{\apx-1-i} (2(2^{i-1}) + 2^i + 2^{i+1}) \\
	&= \twocmone 2^{\apx-1-i} \cdot 2^{i-1} + \twocmone2^{\apx-1-i} \cdot 2^i + \twocmone2^{\apx-1-i} \cdot 2^{i+1} \\
	&= \refsum \sum_{j \in \bucket_\apx} \q(j).
	\end{align*}
	
	The claim follows by dividing the equalities by $\refsum$.
\end{proof}
\fi
\begin{lemma} 
	\label{lem:tv-distance-far}
	$\totalvardist{\refdist}{\q} = \frac{C}{4C-1}$
\end{lemma}
\ifnum\withproofs=1
\begin{proof}
	By direct calculation,
	\begin{align*}
	2\refsum\totalvardist{\refdist}{\q} &= \refsum \sum_{j=1}^{\refsum} |\refdist(j) - \q(j)| \\
	&= \twocmone 2^{\apx-1}(2-1) + \twocmone(2^\apx - 2^{\apx-1}) + \frac12 \sum_{i=1}^{\apx-1} \left( \twocmone2^{\apx-i}(2^i - 2^{i-1}) + \twocmone2^{\apx-1-i}(2^{i+1}-2^i) \right) \\
	&= \twocmone 2^{\apx} + \sum_{i=1}^{\apx-1} (2^{i-1}\twocmone2^{\apx-i} + \twocmone2^{\apx-1-i}2^i) \\
	&= \twocmone 2^{\apx} + \sum_{i=1}^{\apx-1} (\twocmone2^{\apx-1} + \twocmone2^{\apx-1}) \\
	&= \apx \twocmone 2^{\apx}.
	\end{align*}
Dividing both sides by $2\refsum$ yields the lemma.
\end{proof}
\fi
\begin{lemma}
\label{lem:tol-mult-buckets-almost-uniform}
	For every $0 \leq i \leq \apx$, $\frac{1}{4\apx-1} \leq \p(\bucket_i) \leq \frac{4}{4\apx-1}$ (and similarly for $\q(\bucket_i))$.
\end{lemma}
\ifnum\withproofs=1
\begin{proof}
	By Lemma~\ref{lem:tol-mult-equal-buckets}, it suffices to check either $\p(\bucket_i)$ or $\q(\bucket_i)$ for each $0 \leq i \leq \apx$.  For bucket $0$, we get
	
	\[
	\q(\bucket_0) = \frac{2}{\refsum} \cdot \twocmone{2^\apx-1} = \frac{2}{4C-1}.
	\]
	
	For bucket $C$, we get
	
	\[
	\q(\bucket_{\apx}) = \frac{2^{\apx-1}}{\refsum} \cdot \twocmone = \frac{1}{4\apx-1}
	\]
	
	For $1 \leq i \leq C-1$, we get
	
	\[
	\p(\bucket_i) = \frac{2^i}{\refsum} \cdot \twocmone(2^{\apx+1-i}) = \frac{4}{4\apx-1}.
	\]
	
\end{proof}
\fi
\begin{lemma} 
\label{lem:tv-distance-close}
	$\totalvardist{\refdist}{\p} = \frac{1}{4C-1}$
\end{lemma}
\ifnum\withproofs=1
\begin{proof}
	By direct calculation,
	\[
	2\refsum\totalvardist{\refdist}{\p} = 2^{\apx-1} \cdot (2^\apx - 1) + 2^{\apx-1} \cdot (2^\apx - 1) = 2^{\apx} (2^{\apx-1}) = \twocmone 2^{\apx}.
	\]
	Dividing both sides by $2\refsum$ yields the lemma.
\end{proof}
\fi
\newcommand{\numblk}{t}

We assume that $n$ is a multiple of $\refsum$, and define $\numblk := \frac{n}{\refsum}$.
To define $\closefam$ and $\farfam$ over [n], we will need one further piece of notation. We denote by $\mathcal{B}'_{\refsum}\subseteq\mathcal{S}_{\refsum}$ the set of all permutations of $[\refsum]$ ``respecting the buckets,'' that is, for every $0 \leq i \leq \apx$,
\[
\mathcal{B}'_{\refsum} = \{ \pi \in \mathcal{S}_{\refsum} : \pi(\bucket_i) = \bucket_i \forall i \in \{0,1,\ldots,C\} \}
\]
We then let
$
\refdist^* := \frac{1}{\numblk} (\refdist \sqcup \refdist \sqcup \cdots \sqcup \refdist)
$
as well as
\begin{align*}
\closefam &= \setOfSuchThat{  \frac{1}{\numblk}(\closedist\circ\pi_1 \sqcup \closedist\circ\pi_2 \sqcup \dots \sqcup \closedist\circ\pi_\numblk) }{ \pi_1,\dots, \pi_\numblk \in \mathcal{B'}_{\refsum} }
\\
\farfam &= \setOfSuchThat{  \frac{1}{\numblk}(\fardist\circ\pi_1 \sqcup \fardist\circ\pi_2 \sqcup \dots \sqcup \fardist\circ\pi_\numblk) }{ \pi_1,\dots, \pi_\numblk \in \mathcal{B'}_{\refsum} }
\end{align*}
where as before $\sqcup$ denotes vector concatenation.  Since $\totalvardist{\refdist}{\closedist \circ \pi} = \totalvardist{\refdist}{\closedist}$ and $\totalvardist{\refdist}{\fardist \circ \pi} = \totalvardist{\refdist}{\fardist}$ for all $\pi \in \mathcal{B}'_{\refsum}$, we have that $\totalvardist{\refdist^*}{\p} = \frac{1}{4\apx-1}$ for every distribution $\p \in \closefam$, and $\totalvardist{\refdist^*}{\q} = \frac{\apx}{4\apx-1}$ for every distribution $\q \in \farfam$.  Further, repeating the same partitioning of each interval of $\refsum$ elements of $[n]$ into buckets $\bucket_0,\bucket_1,\ldots,\bucket_C$, we have $\numblk (\apx+1)$ buckets, such that distinguishing a distribution in $\closefam$ from a distribution in $\farfam$ requires seeing at $2$ samples in at least one of these buckets.
Since the probability mass on each bucket is in the interval $[\frac{1}{\numblk(\apx+1)},\frac{4}{\numblk(\apx+1)}]$, at least $\Omega(\sqrt{\numblk(\apx+1)}) = \Omega(\sqrt{n(\apx+1)/\refsum})$ queries to distinguish in $\closefam$ from a distribution in $\farfam$, completing the proof of Theorem~\ref{thm:tol-mult-main}.
\end{proof}

\fi

\ifnum\backwardsbuckets=1
\subsubsection*{Tolerant testing $C$-approximation}
\newcommand{\twocmone}{m}
\renewcommand{\bucket}{B}
\newcommand{\apx}{C}
\newcommand{\refsum}{s}
We now turn to our second tolerant testing lower bound, which applies to algorithms providing a $C$-factor approximation of the distance to the reference distribution.
\begin{theorem}
	\label{thm:tol-mult-main}
	Any algorithm which, given a reference distribution $\q$ over $[n]$, $\apx \geq 2$, and sample access to an unknown distribution $\p\in \Pi_n(\q)$, distinguishes with probability at least $2/3$ between (i)~$\totalvardist{\p}{\q} \leq \frac{1}{4\apx-1}$ and (ii)~$\totalvardist{\p}{\q} \geq \frac{\apx}{4\apx-1}$, must have sample complexity $\bigOmega{\sqrt{\frac{n}{4^\apx}}}$.
\end{theorem}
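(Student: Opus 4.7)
The plan is to mirror the structure of the proof of~\cref{theo:toltesting:lb:1}: build two candidate distributions $\closedist,\fardist$ over a small ``gadget'' domain of size $\refsum = \Theta(4^C)$ that are both permutations of a common reference $\refdist$, such that $\totalvardist{\refdist}{\closedist}$ and $\totalvardist{\refdist}{\fardist}$ are separated by a factor of $C$; then concatenate $\numblk \eqdef n/\refsum$ independently permuted copies, and conclude via a generalized birthday argument.

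I would first define $\refdist$ to be piecewise constant on $C+1$ ``buckets'' $\bucket_0,\dots,\bucket_C$ whose sizes decrease geometrically ($|\bucket_i|\propto 2^{-i}$ for the middle buckets, with appropriate adjustments at the endpoints) while the per-element probabilities $2^i/\refsum$ increase geometrically, arranged so that each bucket carries total mass roughly $\frac{1}{4C-1}$ (up to a constant factor); this is the analogue of~\cref{lem:tol-mult-buckets-almost-uniform} in the other version. Then I would construct the ``far'' distribution $\fardist$ by shifting mass \emph{one bucket over} within each middle bucket (splitting each bucket into three sub-intervals assigned the per-element probabilities $2^{i-1}/\refsum$, $2^i/\refsum$, $2^{i+1}/\refsum$), and adjusting the endpoints so that total bucket masses are preserved. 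The ``close'' distribution $\closedist$ would instead be obtained from $\refdist$ by a single large swap concentrated at the two extreme buckets $\bucket_0$ and $\bucket_C$, again preserving bucket totals. A direct computation — the content of the analogues of~\cref{lem:tol-mult-equal-buckets,lem:tv-distance-far,lem:tv-distance-close} — should yield $\totalvardist{\refdist}{\closedist} = \frac{1}{4C-1}$ and $\totalvardist{\refdist}{\fardist} = \frac{C}{4C-1}$, providing the desired factor-$C$ gap, while $\closedist,\fardist\in\Pi_\refsum(\refdist)$ by construction.

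Next, letting $\mathcal{B}'_\refsum$ denote the permutations of $[\refsum]$ that fix each $\bucket_i$ setwise, I would define the families
\[
\closefam = \setOfSuchThat{\tfrac{1}{\numblk}(\closedist\circ\pi_1 \sqcup \dots \sqcup \closedist\circ\pi_\numblk)}{\pi_j\in\mathcal{B}'_\refsum},
\quad
\farfam = \setOfSuchThat{\tfrac{1}{\numblk}(\fardist\circ\pi_1 \sqcup \dots \sqcup \fardist\circ\pi_\numblk)}{\pi_j\in\mathcal{B}'_\refsum},
\]
and the reference $\refdist^\ast = \tfrac{1}{\numblk}(\refdist\sqcup\dots\sqcup\refdist)$. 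Since every permutation in $\mathcal{B}'_\refsum$ preserves bucket masses and distance to $\refdist$, every element of $\closefam$ (resp.\ $\farfam$) lies in $\Pi_n(\refdist^\ast)$ and realizes TV distance $\frac{1}{4C-1}$ (resp.\ $\frac{C}{4C-1}$) to $\refdist^\ast$.

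Finally, I would argue indistinguishability. Because every permutation in $\mathcal{B}'_\refsum$ preserves the marginals on each of the $\numblk(C+1)$ total buckets, and these marginals are identical under the two families, any bucket that is hit by at most one sample conveys exactly zero information about which family the distribution came from (the conditional distribution of a single sample inside a bucket is uniform on the bucket in both cases, after averaging over the random bucket-respecting permutation). Consequently, distinguishing $\closefam$ from $\farfam$ requires at least two samples to collide in a common bucket. Each bucket has mass in $\bigl[\frac{1}{\numblk(4C-1)},\frac{4}{\numblk(4C-1)}\bigr]$, so a standard birthday-paradox calculation gives a collision probability of at most $9/10$ unless the sample size is $\bigOmega{\sqrt{\numblk(C+1)}} = \bigOmega{\sqrt{n/\refsum}} = \bigOmega{\sqrt{n/4^C}}$.

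The main obstacle, I expect, will be the bookkeeping: choosing the three sub-interval sizes within each middle bucket of $\fardist$ and the sizes of the two swapped pieces in the endpoints of $\closedist$ so that \emph{all} the following hold simultaneously — integrality of interval sizes, exact preservation of bucket totals (so that $\closedist,\fardist$ really are permutations of $\refdist$), and the clean TV values $\frac{1}{4C-1}$ and $\frac{C}{4C-1}$. Once the arithmetic is set up correctly, the indistinguishability argument is essentially the same local-permutation-plus-birthday reasoning used in~\cref{theo:toltesting:lb:1}, only simpler since we no longer need moment matching: a single collision, rather than $r+1$ samples in a block, is what is required here, which is why the bound is $\sqrt{n}$-type rather than $n^{1-o(1)}$-type.
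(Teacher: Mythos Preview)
Your proposal is correct and follows essentially the same approach as the paper: the geometric bucketed reference $\refdist$, the ``far'' distribution obtained by a three-way split within each middle bucket, the ``close'' distribution obtained by a single swap between the two extreme buckets, and the concatenation-plus-birthday argument are all exactly what the paper does (your bucket indexing runs in the opposite direction, but that is immaterial). You have also correctly anticipated that the only real work is the arithmetic verification of~\cref{lem:tol-mult-equal-buckets,lem:tv-distance-far,lem:tol-mult-buckets-almost-uniform,lem:tv-distance-close}.
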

\begin{remark}
  As discussed in~\cref{rk:small:tolerance},~\cref{thm:tol-mult-main} is essentially optimal, as it matches (up to polylogarithmic factors in the sample complexity) the upper bound from~\cref{theo:ub:testing} when $C=\Theta(\log\ab)$.
\end{remark}
\begin{proof}
	We will prove the theorem via a sequence of lemmas. 
	\ifnum\withproofs=0
	In the interest of space and exposition, their proofs are deferred to~\cref{sec:proofs-of-misc}. 
	\fi 
	We will assume that $\apx \geq 2$ is an integer, and we define $\twocmone = 2^\apx-1$.  Our proof will proceed similarly to the proof of~\cref{theo:toltesting:lb:1}. We will begin by working over $[\twocmone(2^{\apx+1}+2^{\apx-1}-3)]$.  Throughout this section, we partition $[\twocmone(2^{\apx+1}+2^{\apx-1}-3)]$ into $\apx+1$ buckets, which we will denote $\bucket_0,\bucket_1,\ldots,\bucket_{C}$, such that each $\bucket_i$ is a set of consecutive integers, $|\bucket_\apx| = \twocmone2^{\apx-1}$, $|\bucket_0| = \twocmone$, and $|\bucket_i| = \twocmone2^{i+1}$ for $1 \leq i \leq \apx-1$.  \ignore{We define $\bucket_i = \{b_i,b_i + 1,\ldots,b'_i\}$.}  For convenience, we define $\refsum := \twocmone(4\apx-1)2^{\apx-1}$.
	
	\noindent We define a distribution $\refdist$ in the following way:
	\begin{itemize}
		\item For each $j \in \bucket_0$, $\refdist(j) = \frac{2^{\apx}}{\refsum}$.
		\item For each $1 \leq i \leq \apx-1$ and $j \in \bucket_i$, $\refdist(j) = \frac{2^{C-i}}{\refsum}$.
		\item For each $j \in \bucket_\apx$, $\refdist(j) = \frac{1}{\refsum}$.
	\end{itemize}
	
	We define two distributions $\p$ and $\q$ such that $\p$ and $\q$ are hard to distinguish with few samples, such that $\totalvardist{\refdist}{\p}$ and $\totalvardist{\refdist}{\q}$ are far apart. 
	We define $\q$ in the following way:
	\begin{itemize}
		\item For each $j \in \bucket_0$, $\q(j) = \frac{2^{\apx-1}}{\refsum}$.
		\item For each $1 \leq i \leq \apx-1$,
		\begin{itemize}
			\item For $j$ in the first $\twocmone2^{i}$ elements of $\bucket_i$, \ignore{$\{b_i,\ldots,b_i+\twocmone2^{i}-1\}$,} $\q(j) = \frac{2^{C-i-1}}{\refsum}$.
			\item For $j$ in the next $\twocmone2^{i-1}$ elements of $\bucket_i$, \ignore{$\{b_i+\twocmone2^{i},\ldots,b_i+\twocmone(2^{i} + 2^{i-1}) - 1\}$,} $\q(j) = \frac{2^{C-i}}{\refsum}$.
			\item For $j$ in the last $\twocmone2^{i-1}$ elements of $\bucket_i$, \ignore{$\{b_i+\twocmone(2^{i} + 2^{i-1}),\ldots,b'_i\}$,} $\q(j) = \frac{2^{C-i+1}}{\refsum}$.
		\end{itemize} %
		\item For each $j \in \bucket_\apx$, $\q(j) = \frac{2}{\refsum}$.
	\end{itemize}
	
	\noindent We define $\p$ as follows:
	\begin{itemize}
		\item For each $j \in \bucket_0$,
		\begin{itemize}
			\item \ignore{If $j \in \{b_0,\ldots,b_0+2^{\apx-1}-1\}$,} If $j$ is in the first $2^{\apx-1}$ elements of $\bucket_0$, then $\p(j) = \frac1{\refsum}$.
			\item \ignore{If $j \in \{b_0+2^{\apx-1},\ldots,b'_0\}$,}
			If $j$ is in the last $m-2^{\apx-1} = 2^{\apx-1}-1$ elements of $\bucket_0$, then $\p(j) = \frac{2^\apx}{\refsum}$.
		\end{itemize}
		\item For each $1 \leq i \leq \apx-1$ and $j \in \bucket_i$, $\p(j) = \refdist(j) = \frac{2^{C-i}}{\refsum}$.
		\item For each $j \in \bucket_\apx$,
		\begin{itemize}
			\item \ignore{If $j \in \{b_\apx,\ldots,b_\apx+\twocmone2^{\apx-1}-1\}$,} If $j$ is in the first $(\twocmone-1)2^{\apx-1}$ elements of $\bucket_\apx$, then $\p(j) = \frac1{\refsum}$.
			\item \ignore{If $j \in \{b_\apx+\twocmone2^{\apx-1},\ldots,b'_\apx\}$,} If $j$ is in the last $2^{\apx-1}$ elements of $\bucket_j$, then $\p(j) = \frac{2^\apx}{\refsum}$.
		\end{itemize}
	\end{itemize}
	
	\begin{lemma}
		\label{lem:tol-mult-equal-buckets}
		For $0 \leq i \leq \apx$, $\sum_{j \in \bucket_i} \p(j) = \sum_{j \in \bucket_i} \q(j)$.
	\end{lemma}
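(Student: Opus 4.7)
The statement is an equality of three explicit sums per bucket, so the plan is a direct verification bucket-by-bucket, reducing everything to simple identities for powers of $2$. I would clear the common denominator $\refsum$ throughout and work with the integer numerators $\refsum \cdot \p(j)$, $\refsum \cdot \q(j)$. Since all values of $\p$ and $\q$ are of the form $2^a/\refsum$ with $a \in \{0,1,\dots,\apx\}$, each bucket sum reduces to a short sum of powers of two weighted by the number of elements taking that value.

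For the three types of buckets I would proceed as follows. For $\bucket_0$ (size $\twocmone = 2^\apx-1$), on the one hand $\refsum \cdot \sum_{j\in \bucket_0}\q(j) = \twocmone \cdot 2^{\apx-1}$, while on the other hand $\refsum \cdot \sum_{j\in \bucket_0}\p(j) = 2^{\apx-1}\cdot 1 + (2^{\apx-1}-1)\cdot 2^\apx$; expanding gives $2^{\apx-1}+2^{2\apx-1}-2^\apx = 2^{\apx-1}(2^\apx-1) = 2^{\apx-1}\twocmone$, matching. For $\bucket_\apx$ (size $\twocmone\cdot 2^{\apx-1}$), the $\q$-sum is $\twocmone\cdot 2^{\apx-1}\cdot 2 = \twocmone\cdot 2^\apx$, while the $\p$-sum is $(\twocmone-1)2^{\apx-1}\cdot 1 + 2^{\apx-1}\cdot 2^\apx = (2^\apx-2)2^{\apx-1}+2^{2\apx-1} = 2^\apx(2^\apx-1) = 2^\apx \twocmone$, which also matches. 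For a middle bucket $\bucket_i$ with $1\leq i\leq \apx-1$ (size $\twocmone\cdot 2^{i+1}$), the $\p$-sum is trivially $\twocmone\cdot 2^{i+1}\cdot 2^{\apx-i} = \twocmone\cdot 2^{\apx+1}$, while the $\q$-sum splits as
\[
\twocmone\, 2^i\cdot 2^{\apx-i-1} \;+\; \twocmone\, 2^{i-1}\cdot 2^{\apx-i} \;+\; \twocmone\, 2^{i-1}\cdot 2^{\apx-i+1} \;=\; \twocmone\bigl(2^{\apx-1}+2^{\apx-1}+2^\apx\bigr)\;=\;\twocmone\cdot 2^{\apx+1},
\]
which agrees. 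Dividing each equality by $\refsum$ gives the claim.

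The only thing to be careful about (and the ``main obstacle'', such as it is) is the bookkeeping of the piece sizes inside each bucket, namely checking that the sub-blocks of $\bucket_0$ (of sizes $2^{\apx-1}$ and $2^{\apx-1}-1$), of each $\bucket_i$ for $1\leq i\leq \apx-1$ (of sizes $\twocmone\,2^i$, $\twocmone\,2^{i-1}$, $\twocmone\,2^{i-1}$), and of $\bucket_\apx$ (of sizes $(\twocmone-1)2^{\apx-1}$ and $2^{\apx-1}$) really partition the bucket, i.e.\ sum up to $|\bucket_0| = \twocmone$, $|\bucket_i|=\twocmone\,2^{i+1}$, and $|\bucket_\apx|=\twocmone\,2^{\apx-1}$ respectively. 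These are immediate from $2^{\apx-1}+(2^{\apx-1}-1)=\twocmone$, $2^i+2^{i-1}+2^{i-1}=2^{i+1}$, and $(\twocmone-1)+1=\twocmone$. With those partitions verified, the three calculations above complete the proof.
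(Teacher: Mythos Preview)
Your proposal is correct and takes essentially the same approach as the paper: clear the common denominator $\refsum$ and verify each bucket by direct calculation, using only elementary identities for powers of two. Your additional sanity check that the sub-blocks really partition each bucket is a nice touch the paper omits, but otherwise the arguments are identical.
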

	\ifnum\withproofs=1
	\begin{proof}
		The proof is simply direct calculation.	 Observe that in bucket $\apx$, 
		\[
		\refsum \sum_{j \in \bucket_\apx} \q(j) = \twocmone2^{\apx-1} \cdot 2 = (\twocmone-1)2^{\apx-1} + (\twocmone+1)2^{\apx-1} = (\twocmone-1)2^{\apx-1} \cdot 1 + 2^{\apx} \cdot 2^{\apx-1} = \refsum \sum_{j \in \bucket_\apx} \p(j).
		\]		
		In bucket $0$, we have\	\[
		\refsum \sum_{j \in \bucket_0} \q(j) = \twocmone \cdot 2^{\apx-1} = (\twocmone - 1)2^{\apx-1} + 2^{\apx-1} = (2^{\apx} - 2)2^{\apx-1} + 2^{\apx-1} = (2^{\apx-1} - 1) \cdot 2^{\apx} + 2^{\apx-1} \cdot 1 = \refsum \sum_{j \in \bucket_0} \p(j).
		\]
		For $1 \leq i \leq \apx-1$, we have 
		\begin{align*}
			\refsum \sum_{j \in \bucket_i} \p(j) 
			&= \twocmone2^{i+1} \cdot 2^{\apx-i} \\
			&= \twocmone (2^i + 2(2^{i-1}) + 2^{i+1}) 2^{\apx-1-i} \\
			&= \twocmone 2^{i} \cdot 2^{\apx-i-1} + \twocmone2^{i-1} \cdot 2^{\apx-i} + \twocmone2^{i-1} \cdot 2^{\apx-i+1} \\
			&= \refsum \sum_{j \in \bucket_i} \q(j).
		\end{align*}		
		The claim follows by dividing the equalities by $\refsum$.
	\end{proof}
	\fi
	\begin{lemma} 
	\label{lem:tv-distance-far}
		$\totalvardist{\refdist}{\q} = \frac{C}{4C-1}$
	\end{lemma}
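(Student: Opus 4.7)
The plan is to compute $\totalvardist{\refdist}{\q} = \frac{1}{2\refsum}\sum_{j} \refsum \cdot |\refdist(j) - \q(j)|$ by partitioning the sum according to the buckets $\bucket_0, \bucket_1, \dots, \bucket_\apx$. Since the two distributions are piecewise constant on sub-blocks within each bucket, each of these contributions reduces to a short arithmetic computation, and the claim then follows by summing.

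For bucket $\bucket_0$, every element satisfies $|\refdist(j) - \q(j)| = |2^{\apx} - 2^{\apx-1}|/\refsum = 2^{\apx-1}/\refsum$, and there are $|\bucket_0| = \twocmone$ such elements, contributing $\twocmone 2^{\apx-1}/\refsum$ to $\refsum\normone{\refdist-\q}$. For bucket $\bucket_\apx$, every element contributes $|1 - 2|/\refsum = 1/\refsum$, and with $|\bucket_\apx| = \twocmone 2^{\apx-1}$ elements the total contribution is again $\twocmone 2^{\apx-1}/\refsum$. For each intermediate bucket $\bucket_i$ with $1 \le i \le \apx-1$, the first $\twocmone 2^i$ elements contribute $2^{C-i-1}/\refsum$ each, the next $\twocmone 2^{i-1}$ elements contribute $0$ (since $\q$ and $\refdist$ agree there), and the last $\twocmone 2^{i-1}$ elements contribute $2^{C-i}/\refsum$ each; summing yields $\twocmone 2^{C-1}/\refsum + \twocmone 2^{C-1}/\refsum = \twocmone 2^{C}/\refsum$, which is notably independent of $i$.

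Adding all contributions gives
\[
\refsum \normone{\refdist - \q} = 2 \cdot \twocmone 2^{\apx-1} + (\apx-1) \twocmone 2^{\apx} = \apx \twocmone 2^{\apx}.
\]
Dividing by $2\refsum = 2\twocmone(4\apx-1)2^{\apx-1} = \twocmone(4\apx-1)2^{\apx}$ yields $\totalvardist{\refdist}{\q} = \apx/(4\apx-1)$, as desired.

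There is no conceptual obstacle here; the proof is purely mechanical. The only mild delicacy is to notice that the two differences appearing inside each intermediate bucket, once weighted by the number of elements on which they occur, happen to sum to the same value $\twocmone 2^{C}/\refsum$ regardless of $i$ (this is exactly the symmetry that makes $\q$'s three sub-levels per bucket work: the "downgraded" part has twice the width of the "upgraded" part, compensating for the factor-of-two gap between the two nonzero differences). Once that observation is made, the rest is bookkeeping. I would present this as a single aligned calculation, as in the companion proof of~\cref{lem:tol-mult-equal-buckets}.
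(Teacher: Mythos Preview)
Your proposal is correct and follows essentially the same direct bucket-by-bucket computation as the paper's proof. The only harmless slip is a notational inconsistency: you describe each bucket's contribution as a quantity with a $1/\refsum$ factor ``to $\refsum\normone{\refdist-\q}$,'' yet your displayed final sum (correctly) drops that factor; this does not affect the argument.
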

	\ifnum\withproofs=1
	\begin{proof}
		By direct calculation,
		\begin{align*}
			2\refsum\totalvardist{\refdist}{\q} &= \refsum \sum_{j=1}^{\refsum} |\refdist(j) - \q(j)| \\
			&= \twocmone 2^{\apx-1}(2-1) + \twocmone(2^\apx - 2^{\apx-1}) + \frac12 \sum_{i=1}^{\apx-1} \left( \twocmone2^{i}(2^{\apx-i} - 2^{\apx-i-1}) + \twocmone2^{i-1}(2^{\apx-i+1}-2^{\apx-i}) \right) \\
			&= \twocmone 2^{\apx} + \sum_{i=1}^{\apx-1} (2^{i-1}\twocmone2^{\apx-i} + \twocmone2^{\apx-1-i}2^i) \\
			&= \twocmone 2^{\apx} + \sum_{i=1}^{\apx-1} (\twocmone2^{\apx-1} + \twocmone2^{\apx-1}) \\
			&= \apx \twocmone 2^{\apx}.
		\end{align*}
		Dividing both sides by $2\refsum$ yields the lemma.
	\end{proof}
	\fi
	\begin{lemma}
		\label{lem:tol-mult-buckets-almost-uniform}
		For every $0 \leq i \leq \apx$, $\p(\bucket_i) \leq \frac{2}{\apx+1}$ (and similarly for $\q(\bucket_i))$.
	\end{lemma}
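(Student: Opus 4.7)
The plan is to combine Lemma~\ref{lem:tol-mult-equal-buckets} with a short direct computation: since that lemma already gives $\p(B_i) = \q(B_i)$ for every $i$, it suffices to bound $\p(B_i)$. I would then split into three cases according to the three-part definition of $\p$ (constant agreement with $\refdist$ on the middle buckets, and a two-level structure on $B_0$ and $B_C$).

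The easiest case is the middle buckets $1 \leq i \leq C-1$, where $\p$ agrees with $\refdist$ by construction, so
\[
    \p(B_i) \;=\; |B_i| \cdot \tfrac{2^{C-i}}{s} \;=\; \tfrac{m \cdot 2^{i+1} \cdot 2^{C-i}}{m(4C-1)2^{C-1}} \;=\; \tfrac{4}{4C-1}.
\]
For $B_0$, summing the two levels of mass ($1/s$ on the first $2^{C-1}$ elements and $2^{C}/s$ on the remaining $m - 2^{C-1} = 2^{C-1}-1$ elements) telescopes to $\p(B_0) = \tfrac{m \cdot 2^{C-1}}{s} = \tfrac{1}{4C-1}$. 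An analogous computation on $B_C$, whose first $(m-1)2^{C-1}$ elements carry mass $1/s$ and whose last $2^{C-1}$ carry mass $2^{C}/s$, yields $\p(B_C) = \tfrac{m \cdot 2^{C}}{s} = \tfrac{2}{4C-1}$. (These are exactly the arithmetic identities already exploited in the proof of Lemma~\ref{lem:tol-mult-equal-buckets}, so no new manipulations are needed.)

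Taking the maximum over the three cases gives $\p(B_i) \leq \tfrac{4}{4C-1}$, and to conclude I would simply observe that $\tfrac{4}{4C-1} \leq \tfrac{2}{C+1}$ is equivalent to $4(C+1) \leq 2(4C-1)$, i.e.\ $C \geq 3/2$, which holds since $C\geq 2$ is an integer. The corresponding bound for $\q$ is then immediate from Lemma~\ref{lem:tol-mult-equal-buckets}. There is no real obstacle here; the only place to be careful is the boundary case $C=2$, where the middle range $1\leq i \leq C-1$ collapses to $\{1\}$ and the counts $2^{C-1}-1$ in the two-level buckets must be nonnegative, both of which are fine for $C \geq 2$.
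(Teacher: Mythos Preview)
Your proposal is correct and follows essentially the same approach as the paper: invoke Lemma~\ref{lem:tol-mult-equal-buckets}, compute the three bucket masses $\tfrac{1}{4C-1}$, $\tfrac{2}{4C-1}$, $\tfrac{4}{4C-1}$ by direct arithmetic, and finish with the inequality $\tfrac{4}{4C-1}\le \tfrac{2}{C+1}$ for $C\ge 3/2$. The only cosmetic difference is that the paper computes the masses of $B_0$ and $B_C$ via the simpler uniform values of $\q$ on those buckets, whereas you sum the two-level structure of $\p$ directly; both routes give the same numbers.
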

	\ifnum\withproofs=1
	\begin{proof}
		We apply Lemma~\ref{lem:tol-mult-equal-buckets} and directly calculate.  For bucket $\apx$, we get
		
		\[
		\p(\bucket_\apx) = \q(\bucket_\apx) = \frac{2}{\refsum} \cdot \twocmone{2^\apx-1} = \frac{2}{4\apx-1}.
		\]		
		For bucket $0$, we get
				\[
		\p(\bucket_0) = \q(\bucket_0) = \frac{2^{\apx-1}}{\refsum} \cdot \twocmone = \frac{1}{4\apx-1}.
		\]		
		For $1 \leq i \leq \apx-1$, we get
		\[
		\q(\bucket_i) = \p(\bucket_i) = \frac{2^i}{\refsum} \cdot \twocmone(2^{\apx+1-i}) = \frac{4}{4\apx-1}.
		\]	
	The claim follows by observing that $\frac{4}{4\apx-1} \leq \frac{2}{\apx+1}$ when $\apx \geq \frac32$.  
	\end{proof}
	\fi
	\begin{lemma}
	\label{lem:tv-distance-close} 
		$\totalvardist{\refdist}{\p} = \frac{1}{4\apx-1}$
	\end{lemma}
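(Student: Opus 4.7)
The proof will be essentially a direct computation, closely mirroring the style of \cref{lem:tv-distance-far}. The plan is to exploit the fact that $\p$ and $\refdist$ agree exactly on the middle buckets $\bucket_1,\ldots,\bucket_{C-1}$, so that only $\bucket_0$ and $\bucket_C$ contribute to $\normone{\refdist-\p}$. First I would observe that, by construction, $\p(j)=\refdist(j)$ for every $j\in\bucket_i$ with $1\leq i\leq C-1$, and thus $2s\,\totalvardist{\refdist}{\p} = s\sum_{j\in\bucket_0}\abs{\refdist(j)-\p(j)} + s\sum_{j\in\bucket_C}\abs{\refdist(j)-\p(j)}$.

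Next I would compute the two remaining contributions separately. In $\bucket_0$, the reference $\refdist$ assigns weight $2^C/s$ to every element; the modified $\p$ keeps $2^C/s$ on the last $2^{C-1}-1$ elements (contributing $0$) and drops to $1/s$ on the first $2^{C-1}$ elements (contributing $(2^C-1)/s = \twocmone/s$ per element). This gives a total of $2^{C-1}\twocmone$ to $s\sum_{j\in\bucket_0}\abs{\refdist(j)-\p(j)}$. A symmetric calculation in $\bucket_C$, where $\refdist$ assigns $1/s$ everywhere and $\p$ jumps up to $2^C/s$ on the last $2^{C-1}$ elements (while matching $\refdist$ on the first $(\twocmone-1)2^{C-1}$), yields another contribution of $2^{C-1}\twocmone$.

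Summing the two contributions gives $2s\,\totalvardist{\refdist}{\p} = 2\cdot 2^{C-1}\twocmone = \twocmone 2^C$, so $\totalvardist{\refdist}{\p}=\twocmone 2^{C-1}/s$. Substituting the value $s = \twocmone(4C-1)2^{C-1}$ from the definition, the factor $\twocmone 2^{C-1}$ cancels and leaves $\totalvardist{\refdist}{\p}=1/(4C-1)$, as claimed.

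There is no real obstacle here beyond careful bookkeeping of the sizes of the sub-intervals within $\bucket_0$ and $\bucket_C$; the only place where one must be slightly careful is checking that $|\bucket_0|-2^{C-1} = 2^{C-1}-1$ (using $|\bucket_0|=\twocmone=2^C-1$) so that the mass-preserving condition from \cref{lem:tol-mult-equal-buckets} is consistent with the per-element distance computation, and analogously that $|\bucket_C|-2^{C-1}=(\twocmone-1)2^{C-1}$. Once these size identities are in hand, the claim follows immediately.
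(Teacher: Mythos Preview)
Your proposal is correct and follows essentially the same approach as the paper: a direct computation of $2s\,\totalvardist{\refdist}{\p}$ using that $\p$ and $\refdist$ agree on the middle buckets, yielding $2^{C-1}m + 2^{C-1}m = m\,2^{C}$ and hence $\totalvardist{\refdist}{\p}=1/(4C-1)$. You simply spell out the bookkeeping that the paper compresses into a single line.
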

	\ifnum\withproofs=1
	\begin{proof}
		By direct calculation,
		\[
		2\refsum\totalvardist{\refdist}{\p} = 2^{\apx-1} \cdot (2^\apx - 1) + 2^{\apx-1} \cdot (2^\apx - 1) = 2^{\apx} (2^{\apx-1}) = \twocmone 2^{\apx}.
		\]
		Dividing both sides by $2\refsum$ yields the lemma.
	\end{proof}
	\fi
	\newcommand{\numblk}{t}
	\newcommand{\distsz}{w} 
	Let $\distsz = \twocmone(2^{\apx+1}+2^{\apx-1}-3)$.
	We assume that $n$ is a multiple of $\distsz$, and define $\numblk := \frac{n}{\distsz}$.
	To define $\closefam$ and $\farfam$ over $[n]$, we will need one further piece of notation. We denote by $\mathcal{B}'_{\distsz}\subseteq\mathcal{S}_{\distsz}$ the set of all permutations of $[\distsz]$ ``respecting the buckets,'' that is, for every $0 \leq i \leq \apx$,
	\[
	\mathcal{B}'_{\distsz} = \{ \pi \in \mathcal{S}_{\distsz} : \pi(\bucket_i) = \bucket_i \forall i \in \{0,1,\ldots,C\} \}
	\]
	We then let
	$
	\refdist^* := \frac{1}{\numblk} (\refdist \sqcup \refdist \sqcup \cdots \sqcup \refdist)
	$
	as well as
	\begin{align*}
		\closefam &= \setOfSuchThat{  \frac{1}{\numblk}(\closedist\circ\pi_1 \sqcup \closedist\circ\pi_2 \sqcup \dots \sqcup \closedist\circ\pi_\numblk) }{ \pi_1,\dots, \pi_\numblk \in \mathcal{B'}_{\distsz} }
		\\
		\farfam &= \setOfSuchThat{  \frac{1}{\numblk}(\fardist\circ\pi_1 \sqcup \fardist\circ\pi_2 \sqcup \dots \sqcup \fardist\circ\pi_\numblk) }{ \pi_1,\dots, \pi_\numblk \in \mathcal{B'}_{\distsz} }
	\end{align*}
	where as before $\sqcup$ denotes vector concatenation.  Since $\totalvardist{\refdist}{\closedist \circ \pi} = \totalvardist{\refdist}{\closedist}$ and $\totalvardist{\refdist}{\fardist \circ \pi} = \totalvardist{\refdist}{\fardist}$ for all $\pi \in \mathcal{B}'_{\refsum}$, we have that $\totalvardist{\refdist^*}{\p} = \frac{1}{4\apx-1}$ for every distribution $\p \in \closefam$, and $\totalvardist{\refdist^*}{\q} = \frac{\apx}{4\apx-1}$ for every distribution $\q \in \farfam$.  Further, repeating the same partitioning of each interval of $\refsum$ elements of $[n]$ into buckets $\bucket_0,\bucket_1,\ldots,\bucket_C$, we have $\numblk (\apx+1)$ buckets, such that distinguishing a distribution in $\closefam$ from a distribution in $\farfam$ requires seeing at $2$ samples in at least one of these buckets.
	Since the probability mass on each of the buckets at most $\frac{2}{\numblk(\apx+1)}$ by Lemma~\ref{lem:tol-mult-buckets-almost-uniform}, at least $\Omega(\sqrt{\numblk(\apx+1)}) = \Omega(\sqrt{n(\apx+1)/\distsz})$ queries to distinguish in $\closefam$ from a distribution in $\farfam$, completing the proof of Theorem~\ref{thm:tol-mult-main}.
\end{proof}
\fi
\printbibliography
\ifnum\withproofs=0
\appendix
\section{Omitted proofs}
	\label{sec:proofs-of-misc}
\input{sec-proofs-of-misc-lemmas}
\fi
\end{document}